\let\footnote=\endnote
\newcommand{\R}{\mathbb{R}}
\newcommand{\E}{\mathbb{E}}
\newcommand{\ignore}[1]{}
\newcommand{\bbf}{\mathbf f}
\newcommand{\bH}{\mathbf H}
\titleformat{\paragraph}[runin]{\bf}{}{}{}[]
\algrenewcommand\alglinenumber[1]{{\sffamily\footnotesize#1}}
\xpatchcmd{\algorithmic}{\itemsep\z@}{\itemsep=-2ex plus0pt}{}{}
\newcommand{\bs}{\mathbf s}
\newcommand{\bR}{\mathbf R}
\newcommand{\bomega}{\boldsymbol{\omega}}
\newcommand{\bX}{\mathbf X}
\newcommand{\bx}{\mathbf x}
\newcommand{\static}{\mathsf{STATIC}}
\newcommand{\cont}{\mathsf{c}}
\newcommand{\Be}{\mathbf e}
\newcommand{\bzero}{\mathbf 0}
\newcommand{\osp}{\operatorname{sp}}
\let\originalparagraph\paragraph
\renewcommand{\paragraph}[2][.]{\originalparagraph{#2#1}}
\newcommand{\hypo}{\textrm{Hypo}}
\providecommand{\norm}[1]{\lVert#1\rVert}
\def\defi{:=}
\begin{document}


 \RUNAUTHOR{Suk and Wang}

\RUNTITLE{Optimal Pricing for Tandem Queues}

\TITLE{Optimal Pricing for Tandem Queues: Does It Have to Be Dynamic Pricing to Earn the Most}

\ARTICLEAUTHORS{%
 \AUTHOR{Tonghoon Suk}
 \AFF{Mathematical Sciences Department, IBM Thomas J. Watson Research Center, Yorktown Heights, NY 10598, \EMAIL{tonghoon.suk@ibm.com}} 

 \AUTHOR{Xinchang Wang}
 \AFF{Department of Marketing, Quantitative Analysis, and Business Law, Mississippi State University, Mississippi State, MS 39762, \EMAIL{xwang@business.msstate.edu}\\ \today} 
} 

\ABSTRACT{%
Tandem queueing systems are widely-used stochastic models that arise from many real-life service operations systems. Motivated by the desire to understand the trade-off between the performance and  complexity of policies for capacity-constrained tandem queueing systems, we investigate the long-run expected time-average revenue, the gain, of the service provider for various pricing policies. The gain-maximization problem is formulated as a Markov decision process model 
but the optimal policy, which dynamically adjusts service prices, is hard to obtain due to the curse of dimensionality.
For general tandem queueing systems, rather than identifying an optimal dynamic policy, we show that the best possible static policy that quotes the same price to all customers is asymptotically optimal when the buffer size at the first station is sufficiently large. A noteworthy feature of our analysis is that we identify an easy-to-obtain but asymptotic optimal static policy associated with a simple optimization problem. We validate our analytic results through numerical experiments and learn that, surprisingly, the gain under the simple static policy is close to the optimal gain even when the buffer size at the first station is moderate.

}%


\KEYWORDS{Tandem queueing system, Static and dynamic policies, Asymptotic optimality, Convergence rate}

\maketitle

%


\section{Introduction}\label{sec:introduction}
Many real-life stochastic service systems can be modeled as tandem queueing systems with finite buffers. One example in \cite{altiok:00} is the container processing system at a seaport's terminal. An outbound container passes through serial service processes in a row before it reaches a slot on a departing ship, such as being unloaded at the container yard, hauled to the working area of a quay crane, and loaded to the ship by the quay crane. An inbound container undergoes an exactly reverse journey from getting off an arriving ship to being placed at the container yard. 
Other examples include call centers, hospital emergency rooms, cost-effective blood screenings, and wireless networks (cf.~\cite{Dijk:88, Le:08, Bar-Lev:13}). 


\paragraph{Entry control}
The service provider controls the entries of customers into a queueing system for specific reasons, such as congestion mitigation \citep{banerjee:12} or revenue maximization \citep{ziya:06}. 
Our work particularly focuses on revenue management for tandem queueing systems,  in which the service provider (he) controls customers' entries through pricing in order to maximize the earned revenue. More specifically, the service provider announces a price to arriving customers. An incoming customer (she) pays the price and enters the system if the price is less than or equal to her \emph{reservation price} (i.e., the highest willingness-to-pay price); otherwise, the customer leaves without purchasing the service. An entrant, which we call an \emph{actual customer}, leaves the system only after completing jobs at all the stations in the tandem system. Customers who wait for service at a station are stored in a finite-sized buffer in front of the station. Meanwhile, an upstream station is allowed to start service only if the buffer space before the downstream station next to it is not fully occupied, which is called the \emph{Communication Blocking Mechanism} \citep{cheng:93}.

\paragraph{Revenue and control policy} The service provider earns revenue by collecting payments from customers. We  assume that the operating cost for the  system is fixed and independent of the quantity of customers in the system. 
This assumption is not restrictive in practical applications, in which the variable operating cost per customer is marginal compared with the cost that has to be paid regardless of the number of customers in the system, such as the installation cost, fixed operating cost, and crew salaries, and is widely used in the literature of airline revenue management \citep{tall:04b} and queue revenue management \citep{afeche:16}. 
In this study, the assumption also allows us  to obtain  fundamental results with simplicity. Without loss of generality, we assume that the operating cost is zero.

In this setting, an optimal pricing problem arises: The service provider needs to determine an optimal pricing policy that pre-specifies the price quoted for each state  of the system (e.g., the combination of queue lengths at all stations) to maximize a system performance criterion such as the long-run expected time-average revenue or  infinite-horizon discounted  revenue. The most favorable policy best balances the trade-off between setting a high price to take advantage of a high marginal revenue  and setting a low price to benefit from a high volume of service buyers.
In this paper, we focus on (approximate) optimal policies maximizing the system's long-run expected time-average revenue, which is referred to as the \emph{gain}, from the perspective of a service provider.

\paragraph{Dynamic Versus Static Pricing} The provider could employ one of the two pricing schemes: dynamic or static pricing. 
While the former adjusts quoted prices dynamically based on the state of the system, the latter quotes the same static price to all customers. Obviously, an optimal dynamic pricing policy achieves a revenue no worse than that obtained under any static pricing policy. However, finding an optimal dynamic policy requires enormous computational efforts for a multi-station tandem queueing system due to the curse of dimensionality.  On the other hand, the static pricing problem has a far smaller size than the dynamic pricing problem and a static  policy is easier to implement than a dynamic policy.

At this point, our motivation is answering the following questions:
\begin{itemize}
\item Can the best static pricing policy, hereafter called an \emph{optimal static policy},  perform as well as an optimal dynamic pricing policy for a finite-buffered tandem queueing system?
\item
If so, under what conditions and how can we find a well-performing static policy?
\end{itemize}
For a system with a capacity-constrained single station and multiple user classes,  it has been proved that the optimal static policy performs as well as the optimal dynamic policy under the condition that the system resource (e.g., bandwidth in a communication network) tends to infinity \citep{paschalidis:00}. However, whether or not the same or similar result holds for a tandem queueing system (which has heavier inter-dependences between stations)  remains yet unknown.

\subsection{Our Contributions}\label{sec:our contributions}
Assuming that the reservation prices of customers are independent and identically distributed, the arrival process of customers follows a (homogeneous) Poisson process, and service times at each station are independent and exponentially distributed, we formulate the optimal pricing problem as a unichain Markov decision process (MDP) model (see Section~\ref{sec: the model} for more details) with the objective to maximize the system's gain. 

\paragraph{A motivating example} We reach out to a numerical example to obtain  initial insights into the performances of optimal static and dynamic pricing policies for a tandem queueing system. In the numerical example, we choose a tandem line of two stations, in which service times at both stations are exponentially distributed with rate $8.0$ and the arrival rate of customers is $3.6$. We set the size of the buffer at station $2$ to be $5$ (i.e., the maximum queue length at station 2 is 6, including the one being served) and allow the buffer size at station 1 to change. The service provider quotes prices from among a discrete set of prices, $\{350,400,450,500, 550, 600,650,700, 750\}$. We hereby imitate these prices as the container handling fees at a seaport; for example, it could cost from $\text{\euro}  260$  to $\text{\euro} 1,014$  to handle a container at EUROGATE Container Terminal Hamburg \citep{eurogate:14}.



We use the unichain policy iteration algorithm \citep{puterman:94} to find an optimal dynamic policy for the MDP model. We then evaluate the gain under each  price and choose the price resulting in the maximum gain as the optimal static price. Figure~\ref{fig: zero holding cost exp} shows  the optimal gains as a function of the buffer size at station $1$ under static and dynamic pricing schemes, when the reservation prices of customers are exponentially distributed with rate 0.002. Figure~\ref{fig: zero holding cost uni} shows the same content with uniformly distributed reservation prices in $[500, 1200]$. Figure~\ref{fig: the change of optimal long-run average profit with B1} suggests that the gap between the gain under the optimal dynamic pricing policy and that under the optimal static pricing policy  gradually vanishes as the buffer size before station 1 becomes large.

\begin{figure}[ht!]
\centering
\begin{subfigure}[b]{0.45\textwidth}
\centering
\includegraphics[width=\textwidth]{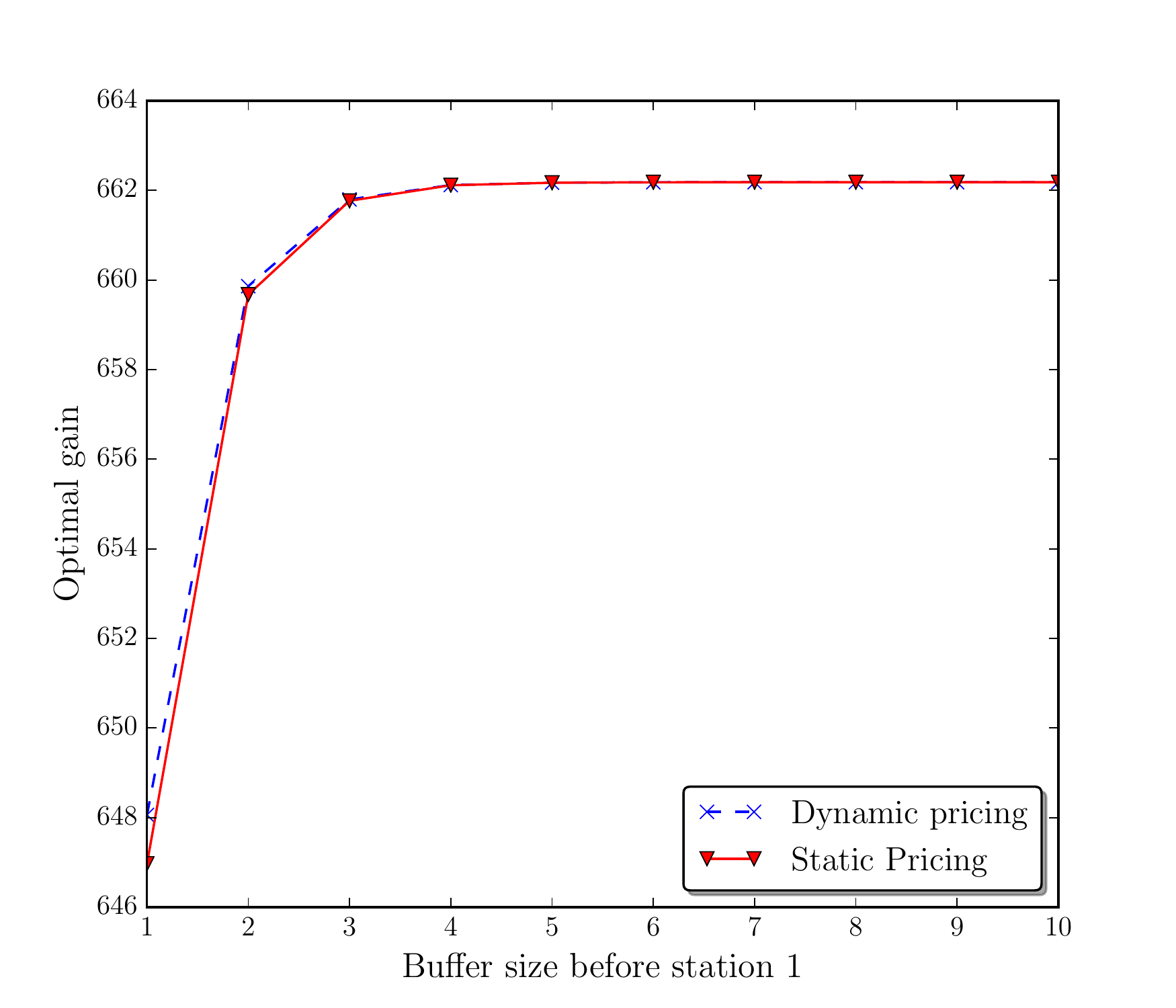}
\caption{Reservation prices $\sim$ Exponential($0.002$)}
\label{fig: zero holding cost exp}
\end{subfigure}
\begin{subfigure}[b]{0.45\textwidth}
\centering
\includegraphics[width=\textwidth]{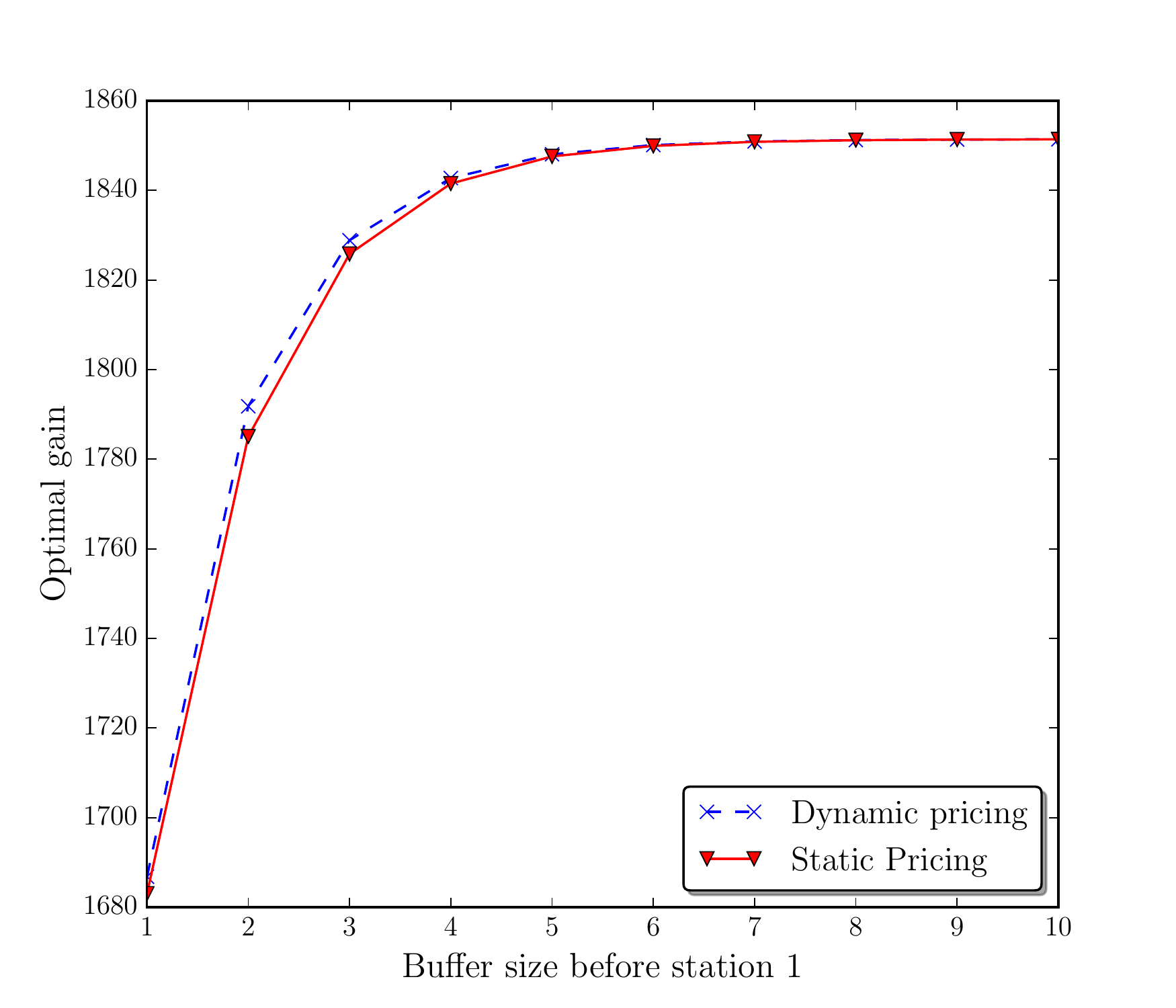}
\caption{Reservation prices $\sim$ Uniform[500, 1200]}
\label{fig: zero holding cost uni}
\end{subfigure}
\caption{Optimal gains under dynamic and static pricing schemes.}
\label{fig: the change of optimal long-run average profit with B1}
\end{figure}

With what we have learned from the literature (e.g., \citet{paschalidis:00}) and the example, this work is aimed at investigating the optimal pricing policies for a tandem queueing system with finite-buffered stations. More specifically, the objective of this paper is as follows: For a general tandem queueing system with arbitrarily  many stations, we aim to understand under what conditions the optimal static pricing performs as well as the optimal dynamic pricing.

\paragraph{Summary of main contributions}
Our work makes  both theoretical and managerial contributions to the literature on revenue management for queues:

\begin{enumerate}[itemsep=0in, label=(\arabic*)]
\item
For a general tandem queueing system with an arbitrary number of stations and a finite buffer in front of each station, we show that the optimal static policy is asymptotically optimal (over the set of all pricing policies). 
More specifically,  the gap between the gain obtained from an optimal static policy and the optimal gain converges to zero as the size of the buffer  before the first station tends to infinity. 

\item 
We show that the convergence of the optimal static gain to the optimal gain occurs at an exponential rate.

\item
When the buffer in front of the first station is large, it is sufficient and convenient to solve a relatively easy static pricing problem rather than  a hard dynamic pricing problem.
\item
When the buffer  before the first station is large, the service provider does not even need to solve the static pricing problem itself, particularly when doing so is quite hard with a continuous set of prices. Instead, she can obtain a good approximate solution by solving a much simpler static optimization model (i.e., upper-bound optimization model~\eqref{eq:ideal_gain_problem}).
\end{enumerate}


\subsection{Literature Review}
\label{sec: literature review}

A great deal of literature on revenue management for queues, mainly through price optimization, exists and most of the studies are focused on single-station queues or parallel queues. 

\paragraph{Revenue management for single-station queues} 
\citet{naor:69} was one of the first studies addressing the static pricing problem for a single-station queue with a finite buffer. \citet{ziya:06,ziya:08} focused on the static pricing for a $G/GI/s/m$ queue and derived analytical  expressions of the optimal static prices for $M/M/1/m$ and Erlang loss systems. \citet{maoui:09} analyzed optimal static pricing policies that maximize the long-run average profit for $M/G/1$ and $M/M/1/N$ queues with holding cost.   \citet{haviv:14} evaluated the performance of a static demand-independent price  for an $M/M/1$ queue using a robust optimization model. The authors found an interesting result that the optimal demand-independent price can perform well compared with the optimal static price obtained with full knowledge of the arrival rate. In a recent paper, \citet{afeche:16} solved the joint price and lead-time quotation problem for an $M/M/1$ queue with multi-type customers and was focused on characterizing the optimal solution to a static nonlinear revenue maximization model with mechanism design constraints under customer choice. \citet{hassin:17} studied a high-low dynamic pricing mechanism for an M/M/1 queue, where a high and a low price are quoted based on the number of customers in the system, and particularly, the authors quantified the profit loss when the queue manager adopts a static price and the FCFS rule.

\citet{low:74a} formulated the dynamic pricing problem as an MDP model for an $M/M/s/N$ queue. The study was later extended for the same queue with an infinite buffer  in \citet{low:74b}. \citet{yoon:04} considered both pricing and admission control problems for a single-station queueing system with multi-class customers and time-dependent arrival and service rates. \citet{maglaras:06} considered the dynamic pricing problem for a single-server queue with multi-class customers and infinite buffers and developed a fluid approximation method to solve the problem rather than solving a computationally demanding MDP model. 
\citet{cil:11} investigated a single-station queue with two classes of customers and showed that the optimal dynamic pricing policy is of a monotone structure in the queue lengths for the two customer classes. 
\citet{afeche:13} studied the Bayesian dynamic pricing problem for the $M/M/1$ queue, in which the service provider chooses to quote a high or low price, or reject customers to maximize the system's revenue, while using Bayesian updating to learn the distribution of patient and impatient customers.

Another stream of studies exists (e.g., \citet{aktaran:09} and \citet{cil:09}) with a particular interest in analyzing the sensitivity of the optimal dynamic pricing policies to various system parameters, such as the number of servers, arrival and service rates, and buffer sizes.

\paragraph{Revenue management for a network of queues}
The literature on revenue management for a network of queues is sparse, especially for those with finite buffers. 
 \citet{ghoneim:85} considered an admission control problem for arrivals to a tandem line of two queues with Poisson arrivals, exponential service times, and infinite buffer sizes. The authors formulated the problem as a discounted MDP model. \citet{ching:00} studied a tandem line of two uncapacitated queues. Due to the unlimited buffer sizes, the two queues can be treated as two separate systems and the authors determined the optimal static pricing policies based on the steady-state probabilities for each queue. 

\citet{paschalidis:00} is one of the works close to and inspiring ours, in which the authors showed that the optimal static policy can perform as well as the optimal dynamic policy for a single-station queue with multi-class users when the system resource (i.e., capacity or buffer) becomes unlimited.  

\paragraph{Positioning}
Our work continues with the existing literature on revenue management for queues and discusses an interesting question as to whether or not the static pricing can achieve as much gain as the dynamic pricing does for a general tandem queueing system, and if so, it would suggest  practitioners to adopt the much simpler  static pricing scheme for revenue maximization.

\subsection{Outline of This Paper}
\label{sec:Outline of This Paper}
The plan of the remaining paper is as follows. Section~\ref{sec: the model} describes the optimal pricing problem and  formulates the problem as an MDP model.  We state our main result in Section~\ref{sec:summary of main results} for readers to have a quick access to the major contributions made in this work. 
Section \ref{sec:numerical experiments} contains numerical experiments that test the performance of static policies.
We provide in Section~\ref{sec:proof_lower_bound} the detailed elaborations and proof for the main result. Some lengthy proofs and technical details are provided in the e-companion.


\section{Systems Description and MDP Formulation}
\label{sec: the model}
This section introduces tandem queueing systems, in which the service provider quotes prices to price-sensitive customers and offers service.  For the systems, we define an optimal pricing problem and describe it as a Markov decision process (MDP) model.

\subsection{Notation}
\label{sec:notation}
We denote the set of non-negative real numbers, the set of non-negative integers, and the set of natural numbers by $\mathbb{R}_{+}$, $\mathbb{Z}_{+}$, and $\mathbb{N}$, respectively. For $n \in \mathbb{Z}_{+}$, $[n]$ represents the set of all non-negative integers less than or equal to $n$ (i.e., $[n]:=\{0,1,2,\dots,n\}$). 
The cardinality of a (finite) set $\mathcal{A}$ is denoted by $|\mathcal{A}|$ (e.g., $|[n]|=n+1$).
For a right-continuous function $f:\mathbb R\to\mathbb R$ with left limits and $a\in\mathbb R$, we represent the left-hand limit of $f$ at $a$ by $f(a-):=\lim_{x\uparrow a} f(x)$. Let $\mathbb{I}_{\mathcal{B}}$ denote the indicator function of event $\mathcal{B}$.

All vectors are column vectors throughout the paper unless stated otherwise. However, for the economical use of space, we write $\mathbf{x} = (x_{1}, x_{2}, \ldots, x_{J})$ inside of the text to describe entries of vector $\mathbf{x} \in \R^{J}$. For $j\in\{1,\dots,J\}$, we denote by $\Be_j$ the $J$-dimensional binary vector, in which the $j$-th entry is $1$ and other entires are $0$. The zero vector is denoted by $\bzero$.

With slight abuse of notation, for any function $f: \mathcal{B}\to \R$ with a finite domain $\mathcal{B}$ (i.e., $|\mathcal{B}|<\infty$), we use bold $\bbf$ to denote a $|\mathcal{B}|$-dimensional vector such that $\bbf \defi (f(x), x \in \mathcal{B})$. 
Similarly, for any function $H: \mathcal{B} \times \mathcal{C}\to \R$, where $\mathcal{B}$ and $\mathcal{C}$ are finite, we use  bold $\bH$ to denote the $|\mathcal{B}|\times |\mathcal{C}|$ matrix, the $(b,c)$-entry of which is $f(b,c)$ for $b\in\mathcal{B}$ and $c\in\mathcal{C}$. 

\subsection{The Tandem Queueing System}
\label{sec: the tandem queueing system} 
In this section,  we describe our tandem queueing model denoted by $(B_1,B_2,\dots,B_J)\in\mathbb Z_+^J$ for $J\in\mathbb N$.  
Time is assumed to be continuous and 
an illustration of the system setup is given in Figure~\ref{fig:tandem_system}.

\begin{figure}[ht]
\centering
\includegraphics[width=0.7\textwidth]{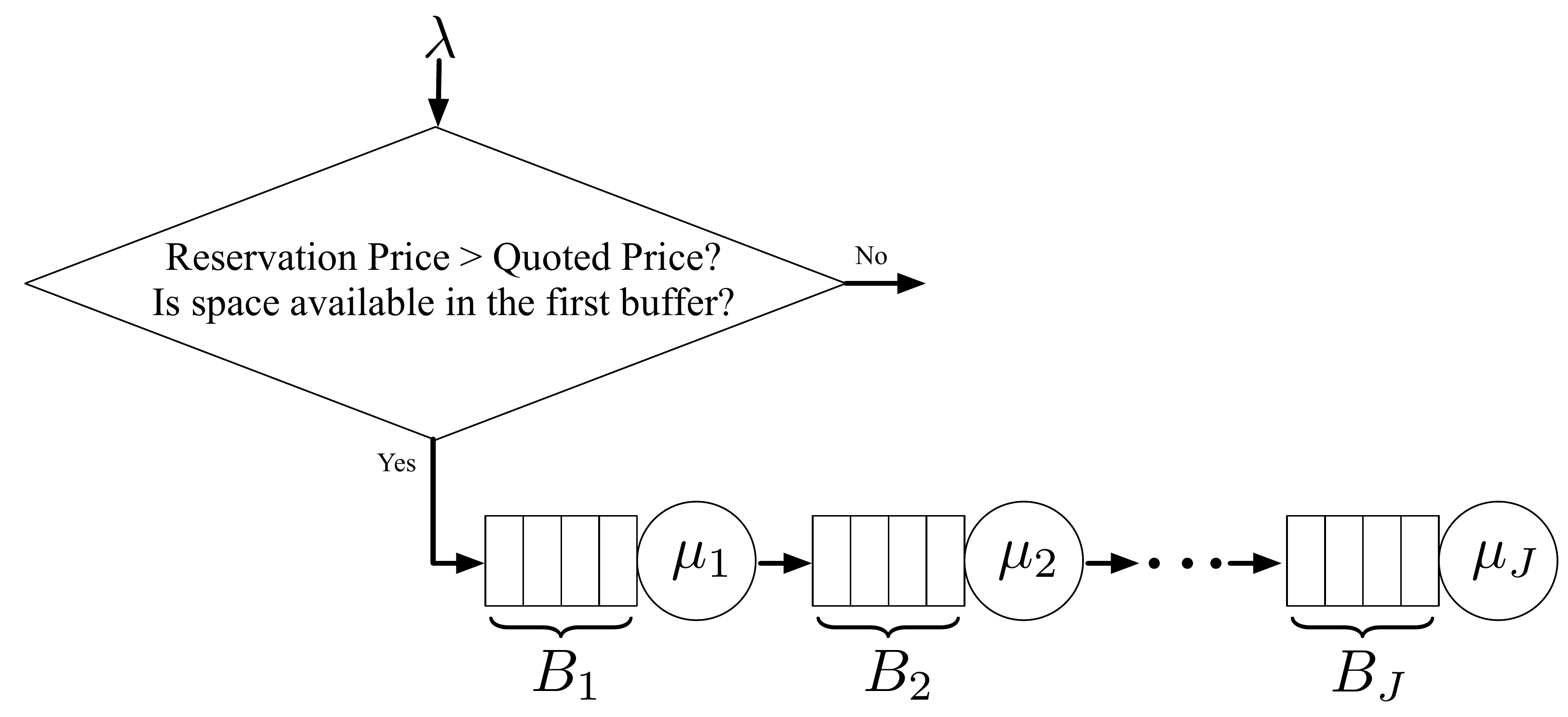}
\caption{Tandem Queueing System $(B_1,B_2,\dots,B_J)$. \vspace{-0.2in}} 
\label{fig:tandem_system}
\end{figure}

\paragraph{System}
The tandem system consists of $J\in\mathbb N$ stations which are lined up one behind another and indexed by $\{1,2,\dots,J\}$. Each station has, before it, a finite buffer  that stores unserviced jobs (customers) and a server that processes at most one job at a time on a first-come, first-served (FCFS) basis. We denote the size of the buffer before station $j$ by $B_{j}\in\mathbb Z_{+}$ so that the maximum number of customers at station $j$ is $(B_{j}+1)$ including the one being served. 
Service times at station $j$ are exponentially distributed with rate $\mu_{j}\in\mathbb{R}^+$ 
and all the service times are independent.

\paragraph{Arrivals} 
The first station (station~$1$) receives a stream of incoming customers according to a homogeneous Poisson process with a rate $0 < \lambda < \infty$; that is, inter-arrival times between two adjacent arrivals are independent and exponentially distributed with mean ${1}/{\lambda}$.
Each incoming customer has her own budget for services 
called the \emph{reservation price}. 
The service provider quotes a price  from a set $\mathcal{A} \subset \R_{+}$ of prices to customers. 
Then, 
we assume that an arriving customer 
enters the system if 
\begin{enumerate}[itemsep=0in, label=(\arabic*)]
\item 
Station $1$ is not full at the time she arrives;
\item
The quoted price is not greater than her reservation price
\end{enumerate} 
as in literatures on pricing for queues (e.g.~\citet{ziya:06,yoon:04,cil:11}).
The cases with delay-sensitive customers as well as price-sensitive are  out of the main topic of this work and require future research.

Any incoming customer who does not meet either of the above conditions is lost. 
We assume that reservation prices are independent (among customers) and identically distributed random variables
 with cumulative distribution function $F(\cdot)$. 
Thus, when station $1$ is not full and the quoted price is $a\in\mathcal{A}$, an incoming customer enters the system with probability $1-F(a-)$. 

\paragraph{Job Processing and Blocking Rule}  
A customer who enters the system completes a job at every station before leaving the system. 
After the $j$-th job of a customer is processed in station $j$, the customer enters the next station, i.e., station $j+1$, if the buffer  before the next station is not full.  
For the case that the buffer is full, we apply the communication blocking mechanism: Server at station $j$ is not allowed to start service until space is available at station $j+1$. Immediately after all jobs of a customer are processed, the customer leaves the system.

\paragraph{Assumptions} 
Throughout this paper, we assume that the \emph{utilization}, denoted by $\rho$, of the system is less than one:
\begin{align*}
	\rho\ :=\ \lambda\left(\frac{1}{\mu_{1}}+\frac{1}{\mu_{2}}+\dots+\frac{1}{\mu_{J}}\right)\ <\ 1.
\end{align*}
We also assume that, for any quoted price $a\in\mathcal{A}$, the probability that an incoming customer with a bigger reservation price than $a$ is positive: $1-F(a-)>0$.
This assumption is only for technical simplicity and  can be eliminated in some of our results.

\subsection{Policies and Performance Metric}
\label{sec:policies and performance metric}
This section introduces 
state-dependent pricing policies and the performance metric.
The state of the system is represented by a $J$-dimensional vector $\bs=(s_1,s_2,\dots,s_J) \in\mathbb{Z}_{+}^{J}$, where $s_{j}$ represents the number of jobs waiting in the buffer of station $j$ or being served at the station. Thus, the state space is $\mathcal{S}=[B_1+1]\times[B_2+1]\times\dots\times [B_J+1]$. 
Whenever the state of the system changes (i.e., when a customer enters the system or completes service at a station), 
a state-dependent pricing policy, called a \emph{Markovian deterministic stationary pricing policy} specifies the price that the service provider quotes to the next incoming customer. The set of all Markovian deterministic stationary pricing policies is denoted by $\Pi$. 
Policy $\pi\in\Pi$ can be regarded as function $\pi: \mathcal{S} \to \mathcal{A}$,  where $\pi(\bs)$ is the quoted price under policy $\pi$ when the system resides at state $\bs \in \mathcal{S}$. 
We represent the state of the system under policy $\pi$ at time $t\geq 0$ by $\bX^\pi(t)\in\mathcal S$ (i.e., $X^{\pi}_{j}(t)$ is the number of jobs in station $j$ at time $t$.) Then, $\{\bX^\pi(t)\,:\,t\geq 0\}$ is a continuous-time Markov chain (CTMC) with state space $\mathcal S$ and 
 generator function $Q^\pi:\mathcal{S}\times\mathcal{S}\to \R_+$ such that, for $\bs\neq\bs^\prime$,
\begin{align*}
    Q^\pi(\bs,\bs^\prime)~=~
\arraycolsep=1.4pt\def\arraystretch{0.5}
\left\{
    \begin{array}{ll}
        \lambda (1-F(\pi(\bs)-)) & \textrm{if $\bs^\prime=\bs+\Be_1$}, \\
        \mu_j & \textrm{if $\bs^\prime=\bs-{\Be}_j+{\Be}_{j+1}$ for $j\in\{1,2,\dots,J-1\}$}, \\
        \mu_J & \textrm{if $\bs^\prime=\bs-{\Be}_J$},\\
        0 & \textrm{otherwise},
    \end{array}\right.
\end{align*}
and $Q^\pi(\bs,\bs)=-\sum_{\bs^\prime\neq\bs} Q^\pi(\bs,\bs^\prime)$.

The performance metric that interests us for a pricing policy is, roughly speaking, a long-run expected time-average revenue of the service provider. The total revenue increases by the amount of the quoted price when a customer enters the system (i.e., the reservation price is not less than the quoted price and the buffer before the first station is not full). In other words, under pricing policy $\pi\in\Pi$, whenever the state becomes $\bs+\Be_1$ from $\bs$, the provider gets rewarded by $\pi(\bs)$, so  
the reward function $R^{\pi}:\mathcal{S}\times\mathcal{S}\to\mathcal{A}\cup\{0\}\subset \mathbb R$ associated with $\pi$ is defined by 
\begin{align*}
R^{\pi}(\bs,\bs^\prime)~=~
\left\{
\arraycolsep=1.4pt\def\arraystretch{0.5}
\begin{array}{ll}
\pi(\bs) & \textrm{if $\bs'=\bs+\Be_{1}$}, \\
0 & \textrm{otherwise}.
\end{array}\right.
\end{align*}Then, with reward $\mathbf{R}^{\pi}$, $\{\bX^\pi(t):t\geq 0\}$ is a \emph{continuous-time Markov reward model} (cf.~\cite{Gouberman:14}). For the formal definition of long-run expected time-average  revenue of pricing policy $\pi\in\Pi$, we let $T^{\pi}_{n}$ be the time when $\bX^\pi(t)$ changes: 
\begin{align*}
    T^{\pi}_{n+1}~:=~ \inf\{t>T^{\pi}_{n}\,:\,\bX^\pi(t)\neq \bX^\pi(t-)\},\quad n=0,1,\cdots,
\end{align*}
where we set $T^{\pi}_{0}=0$ for notational convenience.
Then, the long-run expected time-average revenue of tandem queueing system $(B_1,B_2,\dots,B_J)$ under policy $\pi$, which we call the \emph{gain}  of pricing policy $\pi\in\Pi$, is represented as
\begin{align}
\label{eqn:gain expression 1}
g^{\pi}(B_{1},B_{2},\dots,B_{J})~:=~\lim_{T\to\infty} \frac{1}{T}\; \mathbb E\left[\sum_{n\,:\,T^{\pi}_{n}\leq T} R^\pi(\bX^\pi(T^{\pi}_{n-1}),\bX^\pi(T^{\pi}_{n}))\right],
\end{align}
which is well-defined and identical for any initial state $\bX^{\pi}(0)$ because $\{\bX^{\pi}(t):t\geq 0\}$ is irreducible:
By virtue of the assumptions in Section~\ref{sec: the tandem queueing system}, any state $\bs\in\mathcal {S}$ communicate with $\bzero\in\mathcal{S}$ under any pricing policy $\pi\in\Pi$. Therefore, the Markov reward model under consideration is unichain (or recurrent, more strictly speaking).

\subsection{The MDP Model for Optimal Pricing in Tandem Queueing Systems}
\label{sec:queue rm}
We now state the service provider's optimal pricing problem: the goal is to determine a 
state-dependent pricing policy that maximizes the gain among all such policies, that is, to solve the continuous-time MDP model, called \emph{true} optimization problem,
\begin{align}
\label{eq:max_gain_problem}
\max_{\pi\in\Pi}\; g^\pi(B_1,B_2,\dots,B_J),
\end{align}
the optimal objective value of which is referred to as the \emph{optimal gain}. The main obstacles that make true optimization problem \eqref{eq:max_gain_problem} hard to solve include:
\begin{enumerate}[itemsep=0in, label=(\arabic*)]
\item 
The number of feasible solutions (pricing policies) is  $|\mathcal{A}|^{|\mathcal{S}|}$, which grows exponentially with the number of stations and the sizes of buffers; this phenomenon is the so-called ``curse of dimensionality''.    
\item 
Evaluating the gain under a given policy is computationally challenging because the quoted price relies on the underlying state of the system, and in turn, affects the Markov reward process.
\end{enumerate}
To resolve these challenges, for general tandem queueing systems, we focus on shedding light on the performance of static pricing policies, under which the quoted prices are the same for all states at all times, 
rather than dynamic pricing policies that quotes prices dynamically in response to the real-time state of the system. Particularly, for small-sized tandem systems, we also characterize  optimal static and dynamic pricing policies. 

We close this section by introducing another expression of the gain built on the stationary distribution of the system, by which we obtain an upper bound on the gain. Let $\boldsymbol\eta^\pi$ be the stationary distribution of $\{\bX^\pi(t)\,:\,t\geq 0\}$, where $\eta^\pi(\bs)$ is the steady-state probability that the system under pricing policy $\pi\in\Pi$  stays at state $\bs$. The existence and uniqueness of $\boldsymbol\eta^\pi$ follows from the fact that $\{\bX^\pi(t)\,:\,t\geq 0\}$ is a unichain process with a finite state space. 
Then, following from Proposition 4.2 in \cite{Gouberman:14},  we represent the gain under pricing policy $\pi$, defined in~\eqref{eqn:gain expression 1},  as
\begin{align}
\label{eq:gain_pi}
g^{\pi}(B_{1},B_{2},\dots,B_{J})~=~\sum_{\bs\in\mathcal{S}} r^{\pi}_{\cont}(\bs)\eta^{\pi}(\bs),
\end{align}
where
\begin{align*}
r^{\pi}_{\cont}(\bs)~:=~
\left\{
\arraycolsep=1.4pt\def\arraystretch{0.5}
\begin{array}{ll}
\pi(\bs) \lambda(1-F(\pi(\bs)-)) & \textrm{if $s_1\leq B_1$},\\
0 & \textrm{otherwise},
\end{array}\right.\quad\quad\textrm{for $\bs=(s_1,s_2,\ldots,s_{J})\in\mathcal{S}$,}
\end{align*}
is called the \emph{continuized rate reward function}. 
From {\eqref{eq:gain_pi}}, we derive an upper bound on the system's gain the next proposition.
\begin{proposition}
\label{prop:upper_bound}
For pricing policy $\pi \in \Pi$ for tandem queueing system $(B_1,B_2,\dots,B_J)$, we have
\begin{align*}
g^{\pi}(B_1,B_2,\dots,B_J)~\leq~\max\{a\lambda(1-F(a-)):a\in \mathcal{A}\},
\end{align*}
which is, therefore, an upper bound on the optimal objective value of  true optimization problem~\eqref{eq:max_gain_problem}.
\end{proposition}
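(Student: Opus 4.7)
The plan is to use the stationary-distribution representation of the gain given in equation~\eqref{eq:gain_pi}, namely
\[
g^{\pi}(B_{1},\dots,B_{J}) ~=~ \sum_{\bs \in \mathcal{S}} r^{\pi}_{\cont}(\bs)\,\eta^{\pi}(\bs),
\]
and then pointwise-bound the continuized rate reward $r^{\pi}_{\cont}(\bs)$ by the constant $M := \max\{a\lambda(1-F(a-)) : a \in \mathcal{A}\}$, uniformly over $\bs \in \mathcal{S}$.

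First, I would argue the pointwise bound. Fix any state $\bs = (s_1,\dots,s_J)\in\mathcal{S}$. If $s_1 \leq B_1$, then by the definition of $r^{\pi}_{\cont}$,
\[
r^{\pi}_{\cont}(\bs) ~=~ \pi(\bs)\,\lambda\,(1 - F(\pi(\bs)-)) ~\leq~ \max_{a \in \mathcal{A}} a\lambda(1-F(a-)) ~=~ M,
\]
since $\pi(\bs) \in \mathcal{A}$. If instead $s_1 = B_1 + 1$ (station~1 is full), then $r^{\pi}_{\cont}(\bs) = 0 \leq M$, where the last inequality uses that $M \geq 0$ because prices in $\mathcal{A}\subset\R_+$ and $1 - F(a-)$ are both non-negative. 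Thus $r^{\pi}_{\cont}(\bs) \leq M$ for every $\bs \in \mathcal{S}$.

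Next, I would plug this bound into~\eqref{eq:gain_pi}. Since $\boldsymbol\eta^{\pi}$ is a probability distribution on $\mathcal{S}$ (its existence and uniqueness was already noted because $\{\bX^{\pi}(t)\}$ is unichain with finite state space), we have $\sum_{\bs \in \mathcal{S}} \eta^{\pi}(\bs) = 1$, and therefore
\[
g^{\pi}(B_{1},\dots,B_{J}) ~=~ \sum_{\bs \in \mathcal{S}} r^{\pi}_{\cont}(\bs)\,\eta^{\pi}(\bs) ~\leq~ M \sum_{\bs \in \mathcal{S}} \eta^{\pi}(\bs) ~=~ M.
\]
Since this holds for every $\pi \in \Pi$, taking the supremum over $\pi$ yields the same upper bound on the optimal value of~\eqref{eq:max_gain_problem}.

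There is essentially no obstacle here: the proof is just a pointwise bound followed by averaging against the stationary distribution. The only mild subtlety worth mentioning is the case $s_1 = B_1+1$ (a full first-station buffer), which must be handled separately because no arriving customer is admitted and thus no revenue is generated; this is covered trivially by the non-negativity of $M$.
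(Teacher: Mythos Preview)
Your proposal is correct and follows essentially the same approach as the paper: define $M=\max\{a\lambda(1-F(a-)):a\in\mathcal A\}$, bound $r^\pi_{\cont}(\bs)\le M$ pointwise, and then average against the stationary distribution $\boldsymbol\eta^\pi$. Your treatment of the full-buffer case $s_1=B_1+1$ is slightly more explicit than the paper's, but the argument is otherwise identical.
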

\begin{proof}{Proof.}
Let $M\defi\max\{a\lambda(1-F(a-)):a\in\mathcal{A}\}$, which does not depend on a pricing policy.
Since $\pi(\bs)\in \mathcal{A}$, we have
\begin{align*}
r^{\pi}_{\cont}(\bs)~\leq~\lambda\pi(\bs)(1-F(\pi(\bs)-))~\leq~M
\end{align*}
Therefore, from~\eqref{eq:gain_pi}, we obtain that
\begin{align*}
g^\pi(B_1,B_2,\dots,B_J)~\leq ~\sum_{\bs\in\mathcal S} r^{\pi}_{\cont}(\bs)\eta^\pi(\bs)~\leq ~
         M \sum_{\bs\in\mathcal S} \eta^\pi(\bs)~ = ~  M~=~\max\{a\lambda(1-F(a-)):a\in\mathcal{A}\}.
\end{align*}
Since this holds for any policy $\pi\in\Pi$, $\max\{a\lambda(1-F(a-)):a\in\mathcal A\}$ is an upper bound for  $\max_{\pi\in\Pi} g^\pi(B_1,B_2,\dots,B_J)$.
\hfill \halmos
\end{proof}

\section{Main Results}
\label{sec:summary of main results}
This section states our main results in detail. Section~\ref{sec:static_policies} defines static pricing policies and introduces several related concepts, by which we show the asymptotic optimality of static policies and characterize the gap between the  optimal gain (i.e., the optimal objective value of problem~\eqref{eq:max_gain_problem}) and the gain under the best 
 static pricing policy in Section~\ref{sec:asymptotic}.   
The detailed proofs of these results are provided in Sections~\ref{sec:proof_lower_bound}.

\subsection{Static Policies}
\label{sec:static_policies}
Out of all possible policies (i.e., policies in $\Pi$), we are particularly interested in a class of simply-structured policies, called  \emph{static policies}, which are important enough to warrant a formal definition. 

\begin{definition}[\textbf{Static policy}]
A deterministic stationary policy of tandem queueing systems is \emph{static} if it quotes the same price to every customer regardless of the state of the system.
For static policy $\pi$, we denote by $a^\pi\in\mathcal A$ the price quoted under $\pi$.  On the other hand, for each price $a \in \mathcal{A}$,  $\pi_{a}$ denotes the static policy such that $\pi_{a}(\bs) = a$ for all $\bs\in \mathcal{S}$.  We also let $\Pi_{\static} \defi \{\pi_{a}: a \in \mathcal{A}\}$ be the set of all static policies in $\Pi$.
\end{definition}

The fact that a static policy fixes the quoted price simplifies the system dynamics, particularly, the arrival process. Therefore,
the gain of a static policy has a simple form in terms of the  \emph{potential arrival rate} and the \emph{blocking probability}  associated with the policy, as defined below:
\begin{definition}[\textbf{Potential Arrival Rate} \& \textbf{Blocking Probability}]\ \\
\label{definition:potential arrivals}
    Let $\pi \in \Pi_{\static}$ be a static policy with quoted price $a^{\pi}$. 
    \begin{enumerate}[label=(\arabic*)]
    	\item
The \emph{potential arrival rate}, denoted by $\lambda^{\pi}$, is the arrival rate of customers whose reservation prices are greater than or equal to quoted price $a^{\pi}$:
    	 \begin{align*}
        	\lambda^{\pi}~\defi~\lambda(1-F(a^{\pi}-)).
   		\end{align*}
   		For each price $a\in\mathcal{A}$, we  denote the potential arrival rate of static policy $\pi_{a}$ by $\lambda_{a}\defi \lambda^{\pi_{a}}$.
   		\item
The \emph{blocking probability}, $\beta^\pi$, is the  steady-state probability that the buffer space before station $1$ is fully occupied, namely,
		\begin{align*}
			\beta^{\pi}~:=~\sum_{\bs:s_{1}=B_1+1}\eta^{\pi}(\bs)~=~1-\sum_{\bs:s_1\leq B_1} \eta^{\pi}(\bs),
		\end{align*}
		where we recall that $\boldsymbol{\eta}^{\pi}$ is the stationary distribution of $\{\bX^\pi(t):t\geq 0\}$.
  	\end{enumerate}
\end{definition}
\noindent
For static policy $\pi$, the continuized rate reward function $r^{\pi}_{\cont}:\mathcal{S}\to\mathbb R$ is written as
 $r^{\pi}_{\cont}(\bs)\defi a^\pi \lambda(1-F(a^\pi-))=a^{\pi}\lambda^{\pi}$ for $\bs=(s_1,s_2,\dots,s_{J})\in\mathcal{S}$  with $s_1\leq B_1$,
so the gain in \eqref{eq:gain_pi} becomes
\begin{align}
\label{eq:gain_static}
        g^\pi(B_{1},B_{2},\dots,B_{J})~=~\sum_{\bs\in\mathcal{S}} r^{\pi}_{\cont}(\bs){\eta}^{\pi}(\bs)~=~a^\pi \lambda^{\pi} (1-\beta^\pi),
\end{align}
where $\lambda^{\pi}$ is the potential arrival rate and $\beta^{\pi}$ is the blocking probability under policy $\pi$.

\begin{remark}
\label{remark:blocking probability goes to zero}
Note that, if the buffer size $B_{1}$ is infinite, the blocking probability is $\beta^{\pi}=0$ for any (static) policy $\pi$, which implies that 
$g^{\pi}(\infty,B_{2},\dots,B_{J}) = a^{\pi}\lambda (1-F(a^{\pi}-))$ for $\pi \in \Pi_{\static}$.
\end{remark}

\subsection{Asymptotic Optimality of Static Policies}
\label{sec:asymptotic}
This section shows the asymptotic optimality of static policies as 
$B_{1}$ grows, which is established based on Theorem~\ref{thm:asymptotic_static} below:
\begin{theorem}
\label{thm:asymptotic_static}
Let $\pi \in \Pi_{\static}$ be a static policy for tandem queueing system $(B_{1},B_{2},\dots,B_{J})$ and $a^\pi$ be the quoted price under $\pi$.
Then, we have that, for a large enough $B_1$, 
\begin{align*}
g^{\pi}(B_{1},B_{2},\dots,B_{J}) ~\geq~ a^\pi\lambda(1-F(a^\pi-))(1-c\,p^{B_{1}-1})~=~a^\pi\lambda^{\pi}(1-c\,p^{B_{1}-1}),
\end{align*}
where $c$ and $p<1$ are positive constants depend only on $\lambda^{\pi}$ and $\mu_{1}, \mu_{2}, \ldots, \mu_{J}$.
\end{theorem}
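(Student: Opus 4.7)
The formula \eqref{eq:gain_static} factors the gain as $g^{\pi}=a^{\pi}\lambda^{\pi}(1-\beta^{\pi})$, so the stated inequality is equivalent to a geometric decay bound on the station-$1$ blocking probability,
\begin{equation*}
\beta^{\pi} \;\le\; c\, p^{B_{1}-1} \qquad \text{with } p<1.
\end{equation*}
My plan is to establish this bound by a stationary level-crossing (flow-balance) argument applied to the total-population process $N(t) := X_1^{\pi}(t)+X_2^{\pi}(t)+\dots+X_J^{\pi}(t)$, rather than by analysing the multi-dimensional stationary distribution $\boldsymbol\eta^{\pi}$ directly.

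I would begin by recording three structural facts. First, under the fixed price $a^{\pi}$, $N$ increases by $1$ only at accepted arrivals, which occur at rate $\lambda^{\pi}$ whenever $s_1\le B_1$, and decreases by $1$ only at completions at station $J$, at rate $\mu_J$ whenever $s_J\ge 1$; inter-stage movements preserve $N$. Second, since stations $1,\dots,J-1$ can jointly hold at most $N_{\star}:=\sum_{j=1}^{J-1}(B_j+1)$ customers, every state with $N\ge N_{\star}+1$ necessarily has $s_J\ge 1$. Third, the assumption $\rho<1$ forces $\lambda^{\pi}<\mu_j$ for every $j$, because $\rho=\lambda^{\pi}\sum_k \mu_k^{-1}\ge \lambda^{\pi}/\mu_j$; in particular, $p:=\lambda^{\pi}/\mu_J\in(0,1)$.

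Next I would write the stationary level-crossing identity for $N$ at each integer $n$,
\begin{equation*}
\lambda^{\pi}\,\mathbb{P}(N=n,\,s_1\le B_1) \;=\; \mu_J\,\mathbb{P}(N=n+1,\,s_J\ge 1).
\end{equation*}
For $n\ge N_{\star}$ the right-hand side equals $\mu_J\,\mathbb{P}(N=n+1)$ by the second fact, and the left-hand side is bounded above by $\lambda^{\pi}\,\mathbb{P}(N=n)$, yielding the geometric recursion $\mathbb{P}(N=n+1)\le p\,\mathbb{P}(N=n)$ for $n\ge N_{\star}$. Iterating produces the tail bound $\mathbb{P}(N\ge m)\le p^{m-N_{\star}}/(1-p)$ for $m\ge N_{\star}$; combined with the inclusion $\{s_1=B_1+1\}\subset\{N\ge B_1+1\}$, this gives $\beta^{\pi}\le p^{B_1+1-N_{\star}}/(1-p)$ for all $B_1\ge N_{\star}-1$, which is of the required form $c\,p^{B_1-1}$. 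Substituting this bound into \eqref{eq:gain_static} completes the argument.

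The main obstacle is a careful justification of the level-crossing identity for the multi-dimensional CTMC, i.e.\ verifying that the only transitions crossing the level set $\{N=n\}$ are the two types identified above and that the stationarity of $\{\mathbf{X}^{\pi}(t)\}$ transfers cleanly to the marginal distribution of $N$. A secondary technical subtlety concerns the claim that the prefactor $c$ depends only on $\lambda^{\pi}$ and the $\mu_j$'s: the naive constant $c=p^{2-N_{\star}}/(1-p)$ produced above carries a mild dependence on $B_2,\dots,B_J$ through $N_{\star}$, so one may need to slightly inflate the rate from $\lambda^{\pi}/\mu_J$ to some $p\in(\lambda^{\pi}/\mu_J,1)$ in order to absorb this prefactor and obtain a constant expressed purely in terms of the service parameters.
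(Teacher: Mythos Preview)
Your argument is essentially correct and takes a genuinely different route from the paper. The paper does \emph{not} bound $\beta^{\pi}$ directly; instead it couples the tandem system with an $M/\mathrm{Hypo}/1/B_{1}$ queue (single server, service time $\sum_{j}\mathrm{Exp}(\mu_{j})$, buffer $B_{1}$) so that the tandem's actual-arrival count dominates that of the Hypo system pathwise, and then bounds the Hypo system's blocking probability via Neuts' matrix-geometric representation and Gelfand's formula. Your level-crossing argument on $N=\sum_{j}X_{j}^{\pi}$ is considerably more elementary and yields a sharper, explicit rate $p=\lambda^{\pi}/\mu_{J}$ rather than a spectral radius coming out of a $J\times J$ matrix.

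The trade-off is exactly the one you flagged at the end. Because the paper's auxiliary Hypo system does not involve $B_{2},\dots,B_{J}$ at all, its constants $c,p$ and its ``large enough $B_{1}$'' threshold depend only on $\lambda^{\pi},\mu_{1},\dots,\mu_{J}$, matching the theorem as stated. Your bound $\beta^{\pi}\le p^{B_{1}+1-N_{\star}}/(1-p)$ unavoidably carries $N_{\star}=\sum_{j=1}^{J-1}(B_{j}+1)$, and the proposed fix of inflating $p$ does not actually remove this dependence: it merely transfers it from $c$ to the threshold for $B_{1}$ (you need $(q/p)^{B_{1}}$ small relative to $q^{N_{\star}-1}$, so the required $B_{1}$ still grows with $N_{\star}$). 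If one is content with constants or threshold depending on the full buffer vector, your proof is complete; if the literal claim about $(c,p)$ is to be preserved, the paper's coupling detour is what buys it. One minor slip: $\rho$ is defined with the raw rate $\lambda$, not $\lambda^{\pi}$; your conclusion $\lambda^{\pi}/\mu_{J}<1$ is still correct since $\lambda^{\pi}\le\lambda$.
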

\begin{proof} 
\mbox{}\textit{Proof}.
See Section~\ref{sec:proof_lower_bound}.
\end{proof}
\noindent
For any static policy, Theorem~\ref{thm:asymptotic_static} gives a lower bound on the gain under the policy, which is also a lower bound on the objective value for the following \emph{static} optimization problem:
\begin{align}
\label{eq:static_problem}
\max_{\pi\in\Pi_{\static}} g^\pi(B_1,B_2,\dots,B_J).
\end{align}
A static policy that solves~\eqref{eq:static_problem} is called an \emph{optimal static policy}.

Next, we introduce a \emph{simple static policy} in order to specify the gap between the optimal objective value of static optimization problem~\eqref{eq:static_problem} and that of true optimization problem~\eqref{eq:max_gain_problem}. Let $a_{*}\in\mathcal A$ be an optimal solution to the following \emph{upper-bound} optimization problem:
\begin{align}
\label{eq:ideal_gain_problem}
\max_{a\in\mathcal{A}}\;a\lambda(1-F(a-)).
\end{align}
Then, applying Theorem~\ref{thm:asymptotic_static} to the simple static policy, $\pi_{a_{*}}$, which has a quoted price $a_{*}$, yields the asymptotic optimality of static policies.
\begin{corollary}\label{cor:asymptotic}
For tandem queueing system $(B_{1},B_{2},\dots,B_{J})$ with a large enough $B_1$, we have
\begin{align}
\label{eqn:ideal approximates dynamic}
1-c\,p^{B_{1}-1}~\leq~\frac{g^{\pi_{a_*}}(B_{1},B_{2},\dots,B_{J})}{\max_{\pi\in\Pi}g^\pi(B_{1},B_{2},\dots,B_{J})} ~\leq~1,
\end{align}
from which we obtain that
\begin{align}
\label{eqn:static approximates dynamic}
1-c\,p^{B_{1}-1}~\leq~ \frac{\max_{\pi\in\Pi_{\static}}g^{\pi}(B_{1},B_{2},\dots,B_{J})}{\max_{\pi\in\Pi}g^\pi(B_{1},B_{2},\dots,B_{J})}~\leq~1,
\end{align}
where  $c$ and $p<1$ are positive constants depend only on $\mu_{1}, \mu_{2}, \ldots, \mu_{J}$ and $\lambda(1-F(a_{*}-))$.
\end{corollary}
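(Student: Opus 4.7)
The corollary reads as a direct consequence of Theorem~\ref{thm:asymptotic_static} combined with Proposition~\ref{prop:upper_bound}, so my plan is to assemble these two ingredients around the specific choice $\pi=\pi_{a_*}$ rather than to do any new analytical work. The strategy is: (i) apply Theorem~\ref{thm:asymptotic_static} to $\pi_{a_*}$ to lower-bound the numerator in the displayed ratios, (ii) apply Proposition~\ref{prop:upper_bound} to upper-bound the denominator by the \emph{same} quantity $a_*\lambda(1-F(a_*-))$ that drives the lower bound, and (iii) observe that the upper bounds by $1$ in both chains are trivial inclusions among policy classes.

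For the chain \eqref{eqn:ideal approximates dynamic}, I would first specialize Theorem~\ref{thm:asymptotic_static} to the simple static policy $\pi_{a_*}$, whose quoted price is $a^{\pi_{a_*}}=a_*$. For $B_1$ large enough, this gives
\[
 g^{\pi_{a_*}}(B_1,B_2,\dots,B_J)\;\geq\;a_*\lambda(1-F(a_*-))\bigl(1-c\,p^{B_1-1}\bigr),
\]
where $c$ and $p<1$ depend only on $\mu_1,\dots,\mu_J$ and $\lambda^{\pi_{a_*}}=\lambda(1-F(a_*-))$, as required. By the definition of $a_*$ as a maximizer in \eqref{eq:ideal_gain_problem}, $a_*\lambda(1-F(a_*-))=\max_{a\in\mathcal{A}} a\lambda(1-F(a-))$, and Proposition~\ref{prop:upper_bound} shows this is an upper bound on $\max_{\pi\in\Pi}g^\pi(B_1,\dots,B_J)$. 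Dividing the displayed lower bound by the optimal gain, and using that the denominator is at most $a_*\lambda(1-F(a_*-))$, yields the left inequality of \eqref{eqn:ideal approximates dynamic}. The right inequality is immediate because $\pi_{a_*}\in\Pi_{\static}\subseteq\Pi$, so $g^{\pi_{a_*}}\le\max_{\pi\in\Pi}g^\pi$.

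The chain \eqref{eqn:static approximates dynamic} then follows from \eqref{eqn:ideal approximates dynamic} essentially by inclusion. Since $\pi_{a_*}\in\Pi_{\static}$, we have $\max_{\pi\in\Pi_{\static}}g^\pi\ge g^{\pi_{a_*}}$, so the left-hand inequality of \eqref{eqn:static approximates dynamic} is inherited from the one just established. The right-hand inequality again uses $\Pi_{\static}\subseteq\Pi$. No further estimates are needed.

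Honestly, there is no serious obstacle here; all the work has been done in Theorem~\ref{thm:asymptotic_static}. The only point that requires a moment of care is making sure that the constants $c$ and $p$ in the corollary match those produced by the theorem when instantiated at $\pi_{a_*}$: the theorem states that $c$ and $p$ depend only on $\lambda^\pi$ and the service rates, and for $\pi=\pi_{a_*}$ this becomes dependence on $\lambda(1-F(a_*-))$ and $\mu_1,\dots,\mu_J$, which is exactly the dependence claimed in the corollary. Everything else is bookkeeping.
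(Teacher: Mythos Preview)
Your proposal is correct and mirrors the paper's own proof essentially line for line: apply Theorem~\ref{thm:asymptotic_static} to $\pi_{a_*}$ for the lower bound, invoke Proposition~\ref{prop:upper_bound} together with the optimality of $a_*$ in~\eqref{eq:ideal_gain_problem} for the upper bound, and obtain~\eqref{eqn:static approximates dynamic} from~\eqref{eqn:ideal approximates dynamic} via $\pi_{a_*}\in\Pi_{\static}\subseteq\Pi$. The paper is slightly terser but uses exactly the same two ingredients and the same chain of inequalities, including the observation that the constants $c,p$ inherit the correct dependence from Theorem~\ref{thm:asymptotic_static}.
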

\begin{proof}{Proof.}
Since $\pi_{a_*}\in\Pi_{\static}$, from Theorem~\ref{thm:asymptotic_static}, we have, for a large enough $B_1$,
\begin{align*}
\max_{\pi\in\Pi_{\static}} g^\pi(B_{1},B_{2},\dots,B_{J})~\geq~ g^{\pi_{a_*}}(B_{1},B_{2},\dots,B_{J})~\geq~a_*\lambda(1-F(a_*-))(1-c\,p^{B_1-1}),
\end{align*}
where $c$ and $p<1$ are positive constants that depend only on $\mu_{1}, \mu_{2}, \ldots, \mu_{J}$ and $\lambda^{\pi_{a_*}}=\lambda(1-F(a_{*}-))$.
Since $a_*$ is a solution to problem \eqref{eq:ideal_gain_problem}, by Proposition~\ref{prop:upper_bound}, we have
\begin{align*}
\max_{\pi\in\Pi} g^{\pi}(B_{1},B_{2},\dots,B_{J})~\leq~a_{*}\lambda(1-F(a_{*}-)),
\end{align*}
from which \eqref{eqn:ideal approximates dynamic} and \eqref{eqn:static approximates dynamic} immediately  follow.   
\hfill \halmos
\end{proof}

\textbf{Insights}. 
We learn from Corollary~\ref{cor:asymptotic} that, as $B_{1} \to \infty$ , the maximum gain obtained by solving static optimization problem~\eqref{eq:static_problem} approaches to the optimal objective value of true optimization problem~\eqref{eq:max_gain_problem} exponentially fast. In other words, an optimal static policy gives a gain that well approximates the optimal gain for a general tandem queueing system.

Then, an immediate question follows: \emph{How can we find an optimal static policy, or in other words, how can we solve static optimization problem~\eqref{eq:static_problem}?} Our suggestion is: we  don't need to. From~\eqref{eqn:ideal approximates dynamic}, we conclude that simple static policy $\pi_{a_{*}}$ results in a gain that also converges to the optimal gain exponentially fast as $B_{1}$ tends to infinity. Thus, only solving upper-bound optimization problem~\eqref{eq:ideal_gain_problem}, which is a much easier problem to deal with than optimization problems~\eqref{eq:max_gain_problem} and \eqref{eq:static_problem}, seems convenient and sufficient. 

Note that upper-bound optimization problem \eqref{eq:ideal_gain_problem} only involves the distribution of customers' reservation prices and a feasible set $\mathcal{A}$. If $\mathcal{A}$ is a discrete set, the number of feasible solutions is at most $|\mathcal{A}|$ and solving~\eqref{eq:ideal_gain_problem} can be as easy as a matter of evaluating the objective function value for each element of $\mathcal{A}$. If $\mathcal{A}$ is continuous, the upper-bound optimization problem is a continuous nonlinear optimization problem  and plenty of algorithms are applicable to solve it, such as the trust-region algorithm, gradient-descent method, and interior-point algorithm.  
 Not only is $\pi_{a_*}$ practically tractable to obtain, but also, more importantly, the gain associated with it is close to the optimal gain for a large $B_{1}$, as justified by Corollary~\ref{cor:asymptotic}.

\section{Numerical Experiments}
\label{sec:numerical experiments}
In this section, we use numerical experiments 
to justify our theoretical results obtained in the previous sections and to demonstrate the usefulness and efficiency of the policies and algorithms we proposed.

In particular, Section~\ref{sec:performance of static policies} tests the performance of static policy $\pi_{a^{*}}$, where $a^{*}$ solves upper-bound optimization problem~\eqref{eq:ideal_gain_problem},
and investigates the influence of the buffer size at the first station on the performance.
 In Section~\ref{sec:sensitivity analysis}, we examine  how various parameters, including utilization $\rho$, the number $J$ of stations, and  buffer sizes for stations other than station~1, affect the performance of policy $\pi_{a^{*}}$.

\subsection{Performance of Static Policies}
\label{sec:performance of static policies}

This section examines asymptotic optimality results of static policy $\pi_{a^{*}}$ (i.e., the impact of the buffer size before the first station on the performance of the policy) and discusses
what buffer sizes can ensure a good performance of the policy in practice.

To this end, we calculate the gain of static policy $\pi_{a^{*}}$, which is referred to as `Simple optimal gain'. Recall again that $a^{*}$ is a solution to the upper-bound optimization problem,
\begin{align}
\max \{ a\lambda(1-F(a-)):a\in\mathcal{A}\}. \tag{\ref{eq:ideal_gain_problem} Revisited}
\end{align}
We compare `Simple optimal gain' with `True optimal gain' (i.e., the optimal gain) and `Static optimal gain', which are the optimal objective values of the true optimization problem,
\begin{align}
\max_{\pi\in\Pi} g^\pi(B_1,B_2,\dots,B_J),\tag{\ref{eq:max_gain_problem} Revisited}
\end{align}
and the static optimization problem,
\begin{align}
\max_{\pi\in\Pi_{\static}} g^\pi(B_1,B_2,\dots,B_J),\tag{\ref{eq:static_problem} Revisited}
\end{align}
respectively. 

Note that we evaluate the gain of a static policy such as the `Simple optimal gain' using equation (8.2.1) in \citet{puterman:94}. We apply the unichain policy iteration algorithm \citep[pp.~378]{puterman:94} to get `True optimal gain'.

From the comparison of the three gains for various tandem queueing systems, we learn the following:
\begin{enumerate}[label=(\arabic*)]
  \item
The gain of static policy $\pi_{a^{*}}$ is close to True optimal gain even for a moderate-sized buffer in front of station $1$. 
  This observation practically reinforces  our analytical results in Corollary~\ref{cor:asymptotic}, which shows the asymptotic optimality of $\pi_{a^*}$ when $B_{1}$ is large.
  \item
The ratio of the gain of policy $\pi_{a^{*}}$ to True optimal gain approaches to $1$ exponentially fast.
  \item
 The use of static policy $\pi_{a^{*}}$ is particularly beneficial in the cases where  solving the true optimization problem is a time-consuming attempt.
This argument is also validated by the numerical results in Section~\ref{sec:sensitivity analysis}.
\end{enumerate}
As representative outputs of our numerical experiments, Figure~\ref{fig:performance} and Table~\ref{tbl:relative ratio} present our results for tandem queueing system $(B,0)$. In the experiments, the arrival rate of customers is $\lambda = 3.6$, the service rates of stations are $\mu_{1}=\mu_{2}=8.0$, the reservation prices are normally distributed with mean $500$ and standard deviation $50$, and the set of all available quoted prices is $\mathcal{A}=\{350,400,450,500, 550, 600,650,700, 750\}$.

\begin{figure}[ht!]
\begin{center}
\arraycolsep=1.4pt\def\arraystretch{0.5}
\captionsetup{justification=centering}
\captionsetup[subfigure]{justification=centering}
\begin{subfigure}[b]{0.49\textwidth}
\includegraphics[width=\textwidth]{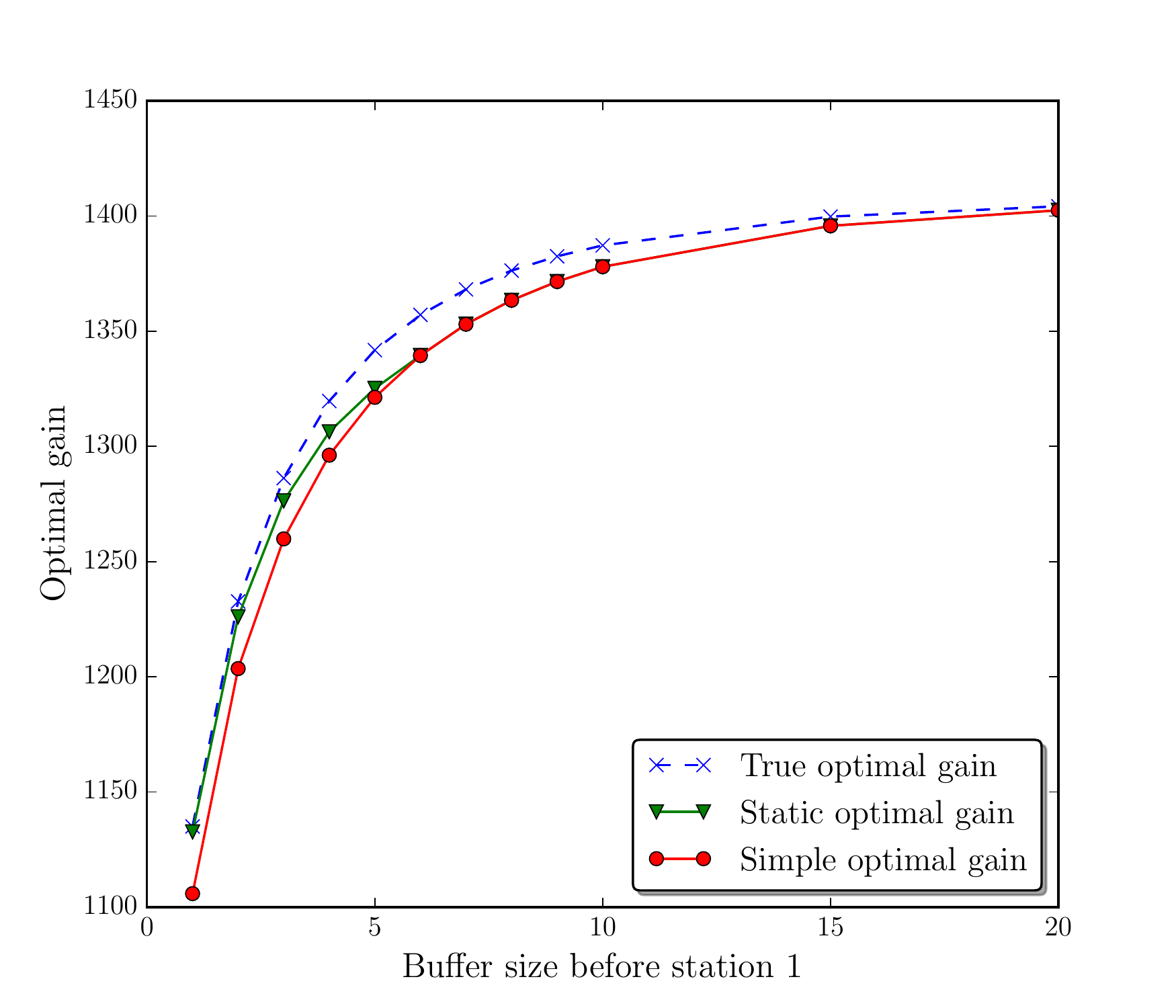}
\caption{Three optimal gains for \\ $B \in \{2,3,\dots,10,15,20\}$}
\label{fig:three optimal gains}
\end{subfigure}
\begin{subfigure}[b]{0.49\textwidth}
\includegraphics[width=\textwidth]{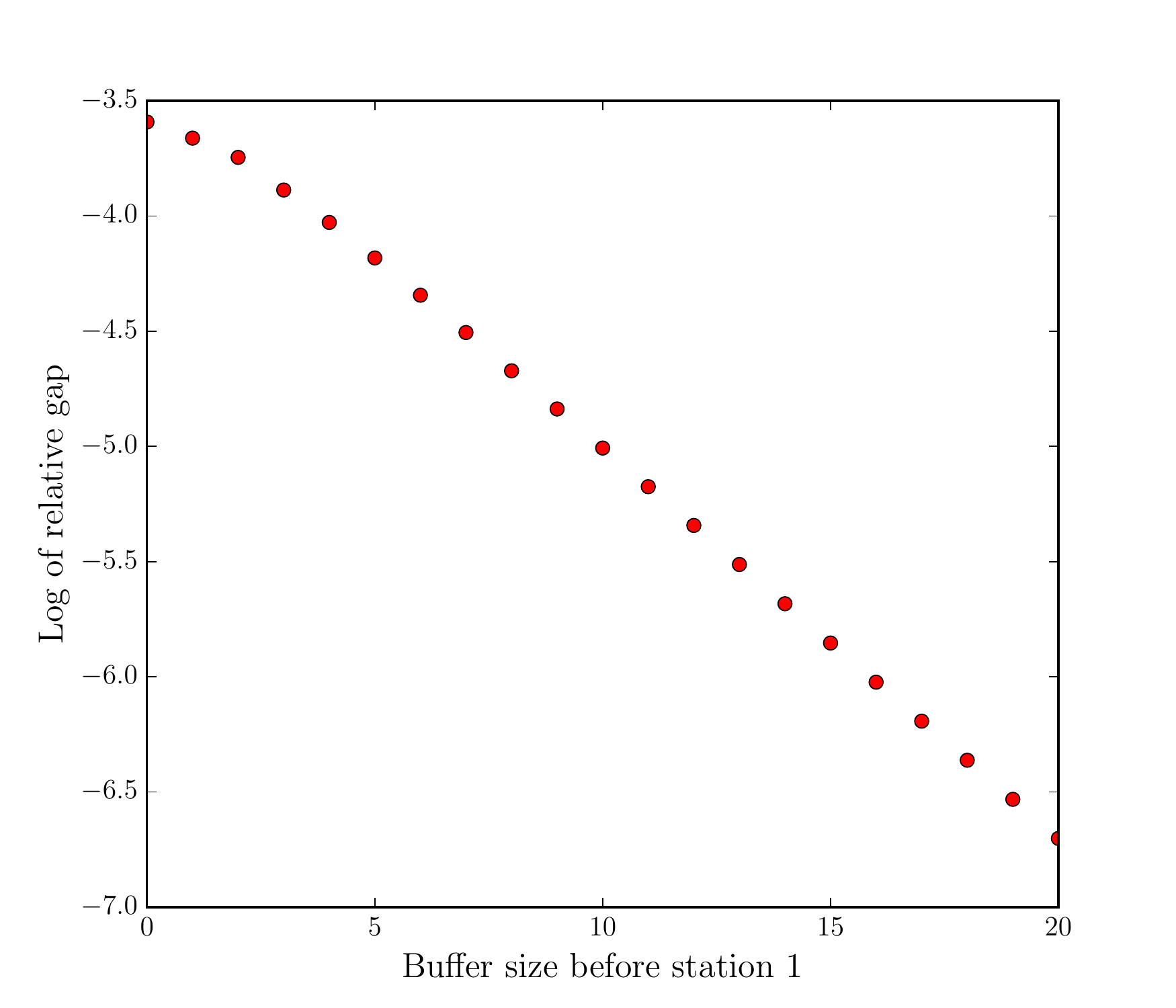}
\caption{Log of relative optimality gap between simple and true optimal gains for $B \in \{2,3,\dots,20\}$}
\label{fig:log of relative gap}
\end{subfigure}
\caption{Performance of $\pi_{a^{*}}$ for tandem queueing system $(B,0)$ depending on $B$:
$\lambda = 3.6$, $\mu_{1}=\mu_{2}=8.0$, reservation prices~$\sim~\text{Normal}(500,50^2)$, and $\mathcal{A}=\{350,400,450,500, 550, 600,650,700, 750\}$.}
\label{fig:performance}
\end{center}
\end{figure}

Figure~\ref{fig:three optimal gains} shows that Simple optimal gain (i.e., the gain of $\pi_{a^{*}}$) becomes closer to True optimal gain as the size of buffer before the first station, $B$,  gets larger as showed in Corollary~\ref{cor:asymptotic}. When $B$ is less than $7$, Simple optimal gain is apparently less than Static optimal gain and True optimal gain, but after $B$ becomes $8$, Simple optimal gain is the same as Static optimal gain. Additionally, when $B$ is greater than $15$, all three gains are very close to each other. The same phenomenon is observed in various settings with different arrival rates, service rates, number of stations, buffer sizes for stations other than station~1, and  distributions of customers' reservation prices. For all simulations we performed, we find that a moderate $B$ (i.e., about $20$) is sufficient to guarantee that the gain of $\pi_{a^{*}}$ is close to  True optimal gain. 

\begin{table}[ht]
\caption{Relative optimality gap between True optimal gain and Simple optimal gain and Computational time for solving true optimal problem~\eqref{eq:max_gain_problem}  using the policy iteration algorithm under various $B$.}
\centering
\arraycolsep=1.4pt\def\arraystretch{0.5}
\begin{tabular}{|r|r|r|}
\hline
\hline
$B$ & Relative optimality gap & Computational time (seconds)\\
\hline
50 & $7.4970\times 10^{-6}$ & 7.2320  \\
100 & $7.1061\times 10^{-9}$ & 12.7700  \\
150 & $<10^{-16}$ & 18.8021 \\
200 & $<10^{-16}$ & 26.5131 \\
250 & $<10^{-16}$& 37.1878 \\
\hline
\hline
\end{tabular}
\label{tbl:relative ratio}
\vspace{-0.2in}
\end{table}

Another notable benefit brought about by  using policy $\pi_{a^{*}}$ relates to the computational time to find $a^{*}$. 
The quoted price, $a^{*}$, is a solution to upper-bound optimization problem \eqref{eq:ideal_gain_problem}, which only involves the distribution of customers' reservation prices and set $\mathcal{A}$. In other words, the same $\pi_{a^{*}}$, once obtained, can be applied to systems with various arrival rates, service rates, and buffer sizes. 
In our simulation setting above, $\mathcal{A}$ is a discrete set and $a^{*}=600$ can be found less than $0.001$ seconds. 
Meanwhile, Table~\ref{tbl:relative ratio} shows the computational times (in seconds) to identify True optimal policy using the policy iteration algorithm along with the relative optimality gap between Simple optimal gain and True optimal gain defined as follows:
\begin{align*}
  \textrm{Relative optimality gap}~=~\frac{\textrm{True optimal gain}-\textrm{Simple optimal gain}}{\textrm{True optimal gain}}.
\end{align*}

Table~\ref{tbl:relative ratio} shows that the computational time increases whereas the relative optimality gap decreases as $B$ becomes large,
which implies that using static policy $\pi_{a^{*}}$ is a better choice when solving true optimization problem~\eqref{eq:max_gain_problem} is hard. Through numerical experiments, we find that, for three-station systems, solving true optimization problem~\eqref{eq:max_gain_problem}  using the policy iteration algorithm is nearly a  mission impossible because of the lack of memory and the extremely long computational time it requires, even when the buffers before all the three stations are moderate-sized.

We also illustrate the rate of convergence of Relative optimality gap in Figure~\ref{fig:log of relative gap}, which is at least exponentially fast  as  $B$ grows according to Corollary~\ref{cor:asymptotic}. 
For a better representation, we plot the log-scale of Relative optimality gap versus $B$  in Figure~\ref{fig:log of relative gap}.
Since Log of Relative optimality gap decreases linearly as $B$ increases, we conclude that the gain of policy $\pi_{a^*}$ converges to `True optimal gain' exponentially fast as $B$ becomes large.  As before, in all tandem queueing systems we tested, the convergence rate appears exponential, justifying a tight bound on the scale of the convergence rate  in Corollary~\ref{cor:asymptotic}.
  
\subsection{Sensitivity Analysis}
\label{sec:sensitivity analysis}
In this section, we focus on the influence of various parameters  on the performance of static policy $\pi_{a^{*}}$. Specifically, we are interested in how the changes of utilization $\rho$ and the number $J$ of stations affect the optimality gap between Simple and True optimal gains. 

First of all, Figures~\ref{fig:gains_2_9}--\ref{fig:gains_3_99} show Simple, Static, and True optimal gains for three different tandem queueing systems: (i) The first system in Figure~\ref{fig:gains_2_9} is exactly the same as that in Section~\ref{sec:performance of static policies}. (ii) The second system in Figure~\ref{fig:gains_2_99} is the same as the first system except for possessing a larger utilization than the first system. (iii) The third one in Figure~\ref{fig:gains_3_99} is the same as the second system except for having one more station than the second system. 
\vspace{-0.2in}
\begin{figure}[ht!]
\centering
\includegraphics[width=0.65\textwidth]{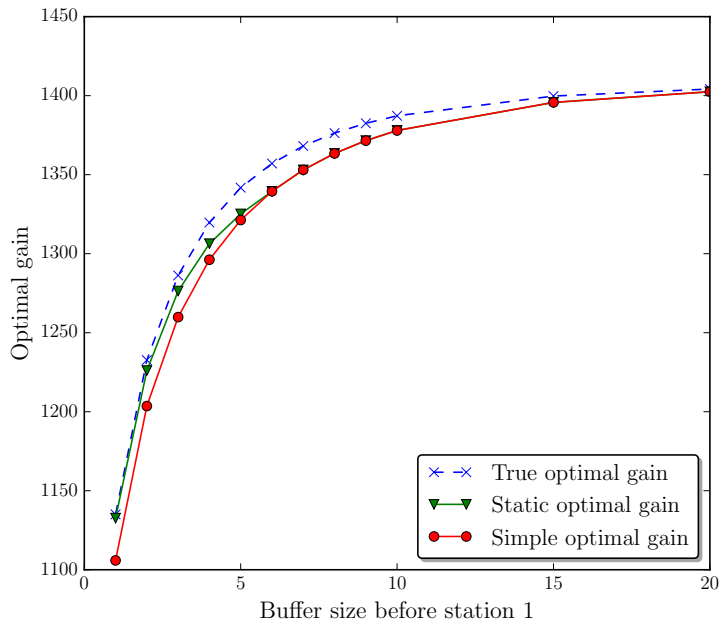}
\caption{True, static, and simple optimal gains for various tandem queueing systems with $J=2$ and $\rho=0.9$. \vspace{-0.2in}}
\label{fig:gains_2_9}
\end{figure}

\begin{figure}[ht]
	\begin{center}
	\captionsetup{justification=centering}
	\captionsetup[subfigure]{justification=centering}	
		\includegraphics[width=0.65\textwidth]{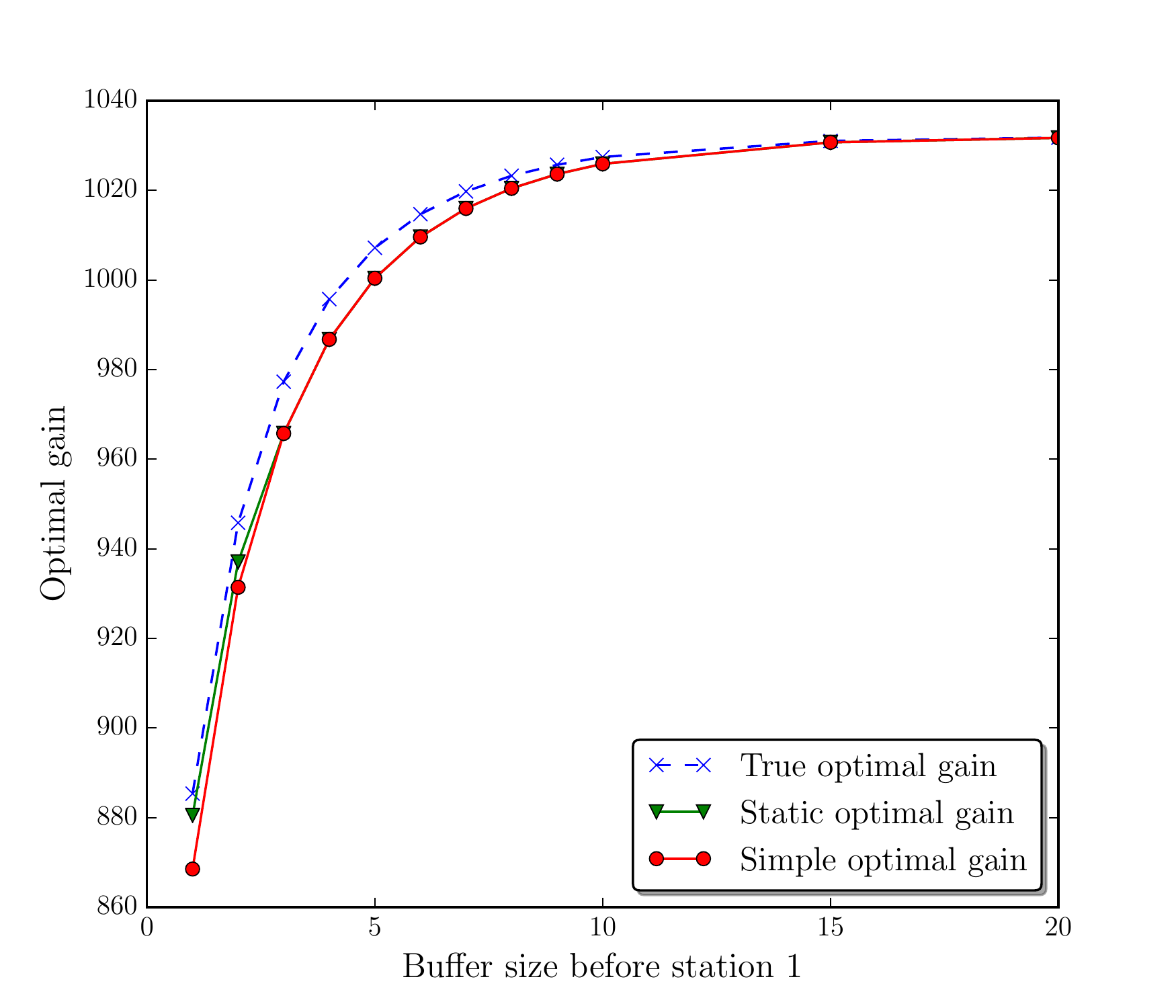}
	\caption{True, static, and simple optimal gains for various tandem queueing systems with $J=2$ and $\rho=0.99$. \vspace{-0.2in}}
	\label{fig:gains_2_99}
	\end{center}
\end{figure}

\begin{figure}[ht]
	\begin{center}
	\captionsetup{justification=centering}
	\captionsetup[subfigure]{justification=centering}
	\includegraphics[width=0.65\textwidth]{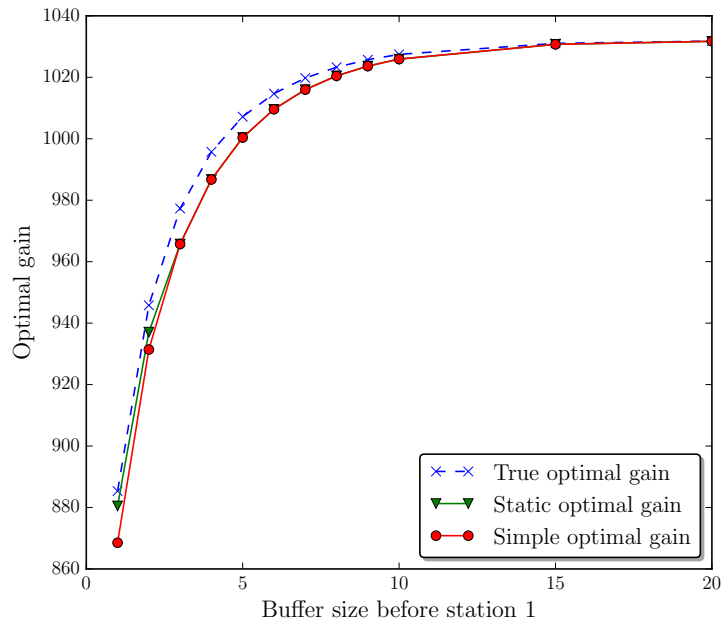}
	\caption{True, static, and simple optimal gains for various tandem queueing systems with $J=3$ and $\rho=0.99$. \vspace{-0.0in}}
	\label{fig:gains_3_99}
	\end{center}
\end{figure}

We find that the performance of $\pi_{a^*}$ improves as the utilization, $\rho$, decreases or the number of stations, $J$, increases. 

We further validate this argument by measuring the optimality gap under various utilizations (Figure~\ref{fig:utilization}) and various numbers of stations (Figure~\ref{fig:size}). Intuitively, these observations make perfect sense since either the decrease in $\rho$ or increase in $J$ reduces the blocking probability, which results in a large gain under static policy $\pi_{a^*}$ according to \eqref{eq:gain_static}.

\begin{figure}[ht!]
\begin{center}
\captionsetup{justification=centering}
\captionsetup[subfigure]{justification=centering}
\begin{subfigure}[b]{0.495\textwidth}
\includegraphics[width=\textwidth]{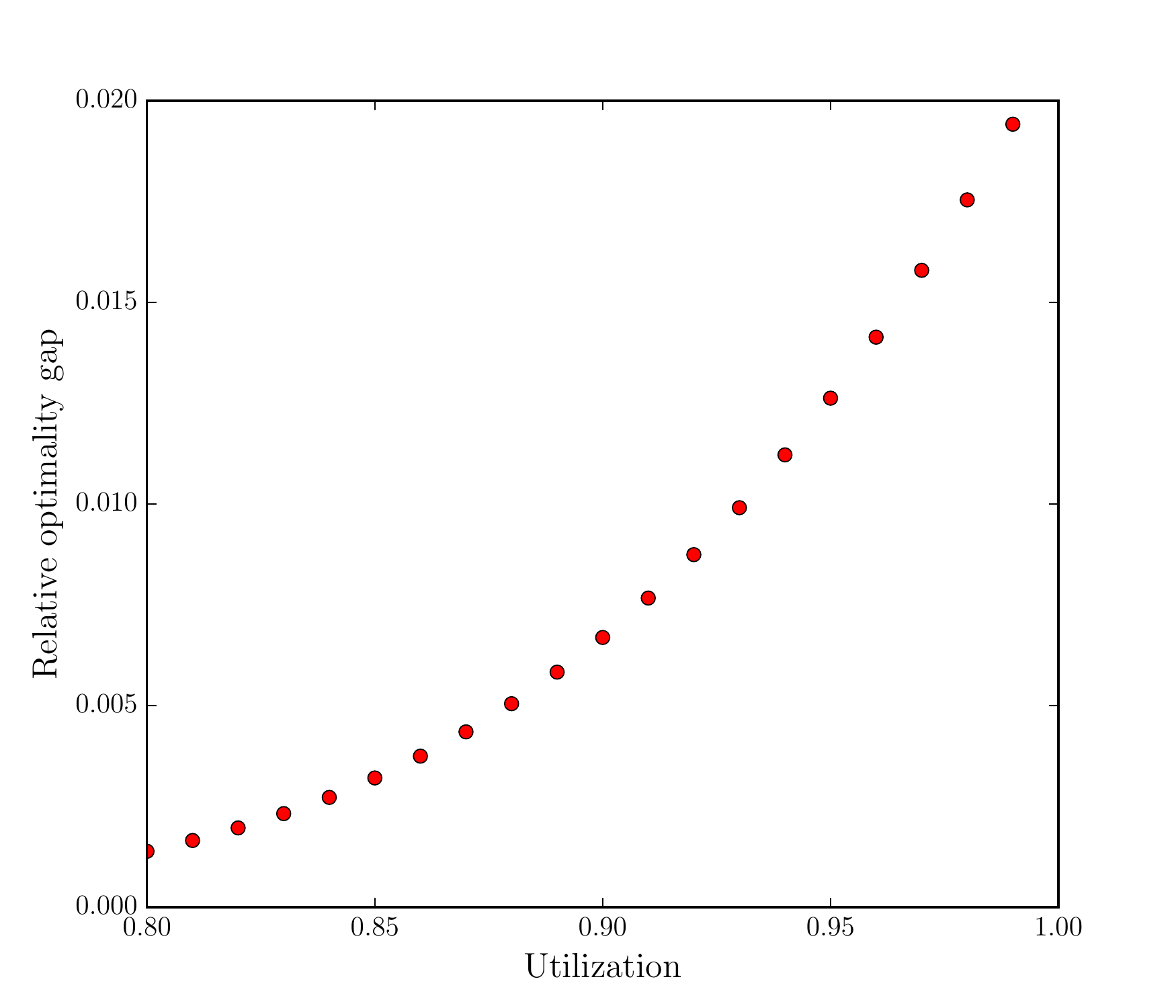}
\caption{Relative optimality gap v.s.~$\rho$}
\label{fig:utilization}
\end{subfigure}
\begin{subfigure}[b]{0.495\textwidth}
\includegraphics[width=\textwidth]{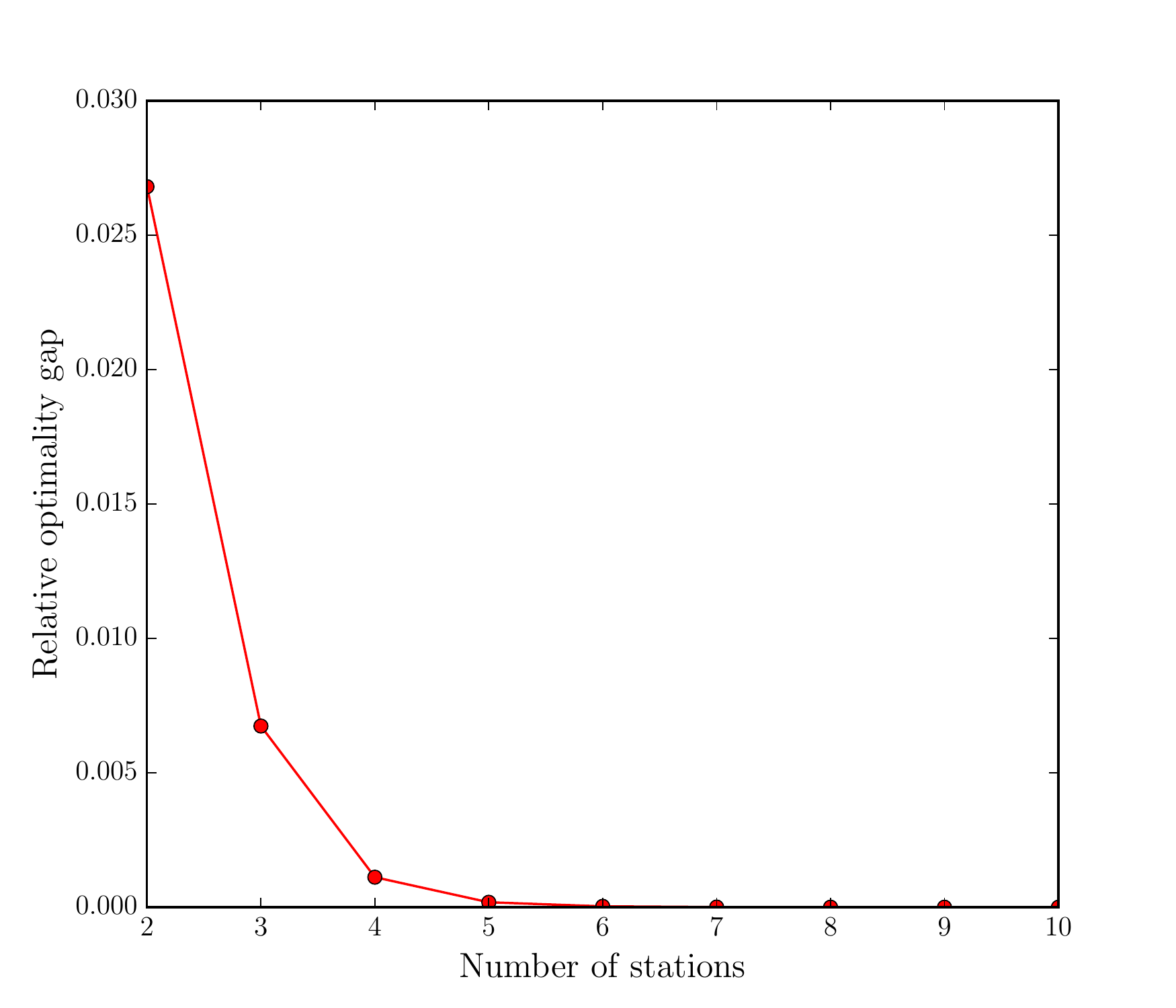}
\caption{Relative optimality gap v.s. $J$}
\label{fig:size}
\end{subfigure}
\caption{Relative optimality gap  of $\pi_{a^{*}}$ for the tandem queueing system: $\lambda = 8.0$, $B_{1}=5$, $B_{j}=0$ for $j \geq 3$, reservation prices follow Normal$(500,50^2)$,  and $\mathcal{A}=\{350,400,450,500, 550, 600,650,700, 750\}$.\vspace{-0.0in}}
\end{center}
\end{figure}
\section{Proof of Asymptotic Optimality of Static Policies}
\label{sec:proof_lower_bound}

In this section, we prove Theorem~\ref{thm:asymptotic_static}, which  provides
a lower bound for $g^{\pi}(B_1,B_2,\dots,B_J)$, the gain of tandem queueing system $(B_1,B_2,\dots,B_J)$ under static policy $\pi$. The key steps in the proof include introducing a  new queueing system coupled with tandem queueing system $(B_1,B_2,\dots,B_J)$ and showing that the coupled system has a gain less than or equal to $g^{\pi}(B_1,B_2,\dots,B_J)$.
Throughout this section, $\pi$ is a fixed static policy with quoted price $a^{\pi}$ and $\lambda^\pi=\lambda(1-F(a^\pi-))$ is the potential arrival rate.

We first define important notions related to arrivals and derive a new expression for gain of $\pi$.
\begin{enumerate}
        \item[(1)] {\bf Potential Arrivals.} 
                We call a customer whose reservation price is not less than the quoted price a \emph{potential customer (arrival)}.
                Since, under static policy $\pi$, the service provider quotes price $a^\pi$ for all states, any incoming customer is a potential customer with probability $1-F(a^\pi-)$.
                Thus, the arrival process of potential customer is Poisson with rate $\lambda^{\pi}$.
        \item[(2)] {\bf Actual Arrivals.} 
                Potential customers who arrive at non-saturated station $1$  and join the system are called \emph{actual customers (arrivals)}.   
                For $t\geq 0$, we denote the cumulative number of all actual arrivals up to time $t$ by $A(t)$. For notational simplicity, we omit $\pi$ for process $A(t)$ even though $A(t)$ depends on $\pi$.  
\end{enumerate}
Since the total revenue up to time $t$ is $a^{\pi}A(t)$, the gain for tandem queueing system $(B_{1},B_{2},\dots,B_{J})$ under policy $\pi$ becomes
\begin{align}\label{gain_from_A}
    g^\pi(B_1,B_2,\dots,B_J)~=~\lim_{t\to\infty} \frac{1}{t}\;\E\left[ a^\pi A(t) \right]
    ~=~\lim_{t\to\infty} a^\pi\; \E\left[ \frac{A(t)}{t} \right],
\end{align}
which is equivalent to~\eqref{eqn:gain expression 1} and well-defined. Since the cumulative number of actual arrivals is always less than or equal to that of potential arrivals, which is Poisson with rate $\lambda^{\pi}$, we have 
\begin{align} \label{eq:upperbound of A(t)/t}
	\lim_{t\to\infty} \E\left[\frac{A(t)}{t}\right]~\leq~\lambda^{\pi},
\end{align}
from which we get $g^{\pi}(B_1,B_2,\dots,B_J)\leq a^{\pi}\lambda^{\pi}$.

\paragraph{$M/\hypo/1/B_1$-queueing system}
Now, we introduce a new queueing system, which consists of a finite buffer and one server such that:
\begin{enumerate}[label=(\arabic*), itemsep=0in]
	\item
The arrival process is Poisson with rate $\lambda^\pi$. In other words, the arrival process is the same as the potential arrival process in the tandem queueing system $(B_{1},B_{2},\dots,B_{J})$ under policy $\pi$;
	\item
The service time of a customer is the sum of $J$ independent exponential (i.e., hypo-exponential) random variables and  the $j$-th exponential random variable has mean $1/\mu_j$ for $j \in \{1,2,\ldots,J\}$;
	\item
The size of the buffer is $B_1$.
\end{enumerate}
We refer to the above-defined system as the $M/\hypo/1/B_1$-queueing system. In this system, any customer arriving at the system with a saturated buffer is lost. We call a customer who is not lost an \emph{actual customer} and represent the cmulative number of all actual customers up to time $t$ by $\tilde{A}(t)$. 

In the remainder of this section, we introduce two main features of the $M/\hypo/1/B_1$-queueing system in Propositions~\ref{prop:lower bound long-run average actual arrivals} and \ref{prop:A_geq_tilde_A} and prove Theorem~\ref{thm:asymptotic_static} from those features. 

One of important reasons for proposing the $M/\hypo/1/B_1$-queueing system is that the long-run expected time-average number of actual customers for this system has a lower bound, which converges to $\lambda^{\pi}$, an upper bound of $\lim_{t\to\infty} \E[A(t)/t]$ as in~\eqref{eq:upperbound of A(t)/t}, exponentially fast as buffer size $B_{1}$ grows: 
\begin{proposition}
\label{prop:lower bound long-run average actual arrivals}
    For the $M/\hypo/1/B_1$-queueing system with a large enough $B_1$, we have
    \begin{align*}
        \lim_{t\to\infty} \E\left[\frac{\widetilde{A}(t)}{t}\right]
        ~\geq~ \lambda^{\pi}(1-c\,{p}^{B_1-1}),
    \end{align*}
    where $c$ and $p<1$ are positive constants that depend only on $\mu_{1},\mu_{2},\dots,\mu_{J}$, and $\lambda^{\pi}$.
\end{proposition}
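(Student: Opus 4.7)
The plan is to reduce the claim to a bound on the stationary blocking probability of the $M/\hypo/1/B_{1}$ queue. Model the system as a continuous-time Markov chain with augmented state $(n,k)$, where $n\in\{0,1,\ldots,B_{1}+1\}$ is the number of customers and $k\in\{1,\ldots,J\}$ is the current service phase of the customer in service; this is a finite quasi-birth--death (QBD) process. The hypo-exponential service time has mean $\sum_{j=1}^{J}1/\mu_{j}$, so the effective utilization $\rho^{*}:=\lambda^{\pi}\sum_{j=1}^{J}1/\mu_{j}$ is at most $\rho<1$ under the paper's standing assumption, and the chain has a unique stationary distribution $\boldsymbol{\pi}$. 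Because arrivals to this system form a Poisson stream of rate $\lambda^{\pi}$, the PASTA property yields
\begin{align*}
\lim_{t\to\infty}\E\!\left[\frac{\widetilde{A}(t)}{t}\right]\;=\;\lambda^{\pi}\bigl(1-P_{B_{1}}^{\mathrm{block}}\bigr),\qquad\textrm{with }\;P_{B_{1}}^{\mathrm{block}}\;:=\;\sum_{k=1}^{J}\pi(B_{1}+1,k).
\end{align*}
It therefore suffices to exhibit $p\in(0,1)$ and $c>0$, depending only on $\lambda^{\pi}$ and $\mu_{1},\ldots,\mu_{J}$, such that $P_{B_{1}}^{\mathrm{block}}\leq c\,p^{B_{1}-1}$.

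I would obtain this geometric bound by exploiting the QBD structure. The infinite-buffer companion $M/\hypo/1/\infty$ shares the same utilization $\rho^{*}<1$, so it too is positive recurrent, and matrix-geometric theory delivers the representation $\boldsymbol{\pi}_{n}^{\infty}=\boldsymbol{\pi}_{1}^{\infty}R^{n-1}$, where the rate matrix $R$ has spectral radius $\eta^{*}\in(0,1)$ determined only by $\lambda^{\pi}$ and $\mu_{1},\ldots,\mu_{J}$. In particular $\|\boldsymbol{\pi}_{n}^{\infty}\|_{1}\leq C_{0}(\eta^{*})^{n-1}$. Coupling the finite- and infinite-buffer systems on a common probability space---sharing the Poisson arrival clock and the exponential phase-completion clocks---sample-path dominates the finite-buffer queue length by the infinite-buffer one and transfers the geometric decay to the truncated chain, with the boundary normalizations absorbed into the constant $c$; taking $p:=\eta^{*}$ concludes the argument.

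The main obstacle I anticipate is this transfer step: the blocking probability is a single level-$(B_{1}+1)$ point mass in the truncated chain, while the matrix-geometric bound for the infinite-buffer chain naturally controls a tail, so one has to handle the normalization ratio of the truncated versus untruncated stationary distributions uniformly in $B_{1}$. A cleaner self-contained route, which avoids any explicit matrix-geometric identity, is a Lyapunov/hitting-time argument with the test function $V(n,k)=\gamma^{n}$ for a $\gamma\in(1,1/\eta^{*})$: a direct generator calculation produces a drift inequality $\mathcal{L}V\leq -\varepsilon V$ outside a finite set, which forces the expected hitting time of level $B_{1}+1$ from the empty state to grow like $\gamma^{B_{1}}$; combined with the regenerative representation $P_{B_{1}}^{\mathrm{block}}=\E[\textrm{time per cycle at level }B_{1}+1]/\E[\textrm{cycle length}]$, this delivers the desired $c\,p^{B_{1}-1}$ bound directly.
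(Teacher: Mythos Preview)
Your overall architecture is the paper's: reduce to the blocking probability via PASTA, then control it through the matrix-geometric tail of the infinite-buffer companion $M/\hypo/1/\infty$. Two remarks---one on a shortcut you miss, one on a genuine gap.

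\textbf{The transfer step.} The paper neither couples nor uses drift. It invokes an exact identity for $M/G/1/K$ queues (Cooper~1972),
\[
\widetilde{\beta}\;=\;\frac{\lVert\widetilde{\boldsymbol{\eta}}(B_{1}+1,\cdot)\rVert_{1}}{1-\sum_{n\ge B_{1}+2}\lVert\widetilde{\boldsymbol{\eta}}(n,\cdot)\rVert_{1}},
\]
which expresses the finite-buffer blocking probability directly as a ratio of infinite-buffer stationary masses. Both numerator and denominator are then bounded via Neuts' representation $\widetilde{\boldsymbol{\eta}}(n+1,\cdot)=\bR^{\,n}\,\widetilde{\boldsymbol{\eta}}(1,\cdot)$ together with Gelfand's formula, giving $\widetilde{\beta}\le c\,p^{B_{1}}$ for any fixed $p\in(\osp(\bR),1)$ once $B_{1}$ is large. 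This dissolves the normalization worry you flag and is shorter than either of your proposed routes. The paper also notes that passing from the almost-sure limit $\widetilde{A}(t)/t\to\lambda^{\pi}(1-\widetilde{\beta})$ to the limit of expectations needs uniform integrability, obtained from $\widetilde{A}(t)\le N(t)$ and Vitali's theorem; you should not skip this.

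\textbf{Your Lyapunov function does not work as stated.} With $V(n,k)=\gamma^{n}$, for any interior phase $k<J$ the only level-changing transition is an arrival, so
\[
\mathcal{L}V(n,k)\;=\;\lambda^{\pi}\gamma^{n}(\gamma-1)\;>\;0,
\]
and there is no negative drift while the server sits in phases $1,\dots,J-1$. A viable drift function must be phase-weighted, $V(n,k)=\gamma^{n}w_{k}$, and solving for admissible $(\gamma,w)$ is precisely the eigen-problem for the rate matrix $R$---so this route circles back to the matrix-geometric machinery rather than bypassing it. Your coupling alternative can be made to work (if $N^{\mathrm{fin}}(t)\le N^{\infty}(t)$ pathwise then $P_{B_{1}}^{\mathrm{block}}\le\Pr(N^{\infty}\ge B_{1}+1)$ immediately, with no normalization issue), but establishing that pathwise inequality for a multi-phase server requires tracking a joint $(N,K)$-invariant, not the one-line assertion you give.
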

\proof{Sketch of Proof.} 
We prove the result by showing that 
\begin{enumerate}
	\item[(1)] $\widetilde{\beta}\leq c\,{p}^{B_{1}-1}$ for large enough $B_{1}$, where $c$ and $p<1$ are positive constants that depend only on $\mu_{1},\mu_{2},\dots,\mu_{J}$ and $\tilde{\lambda}$,
	\item[(2)] $\lim_{t\to\infty} \E\left[ \widetilde{A}(t)/t \right]~=~\lambda^{\pi}(1-\widetilde{\beta})$,
\end{enumerate}
where $\tilde{\beta}$ is the \emph{blocking probability}, the steady-state probability that an incoming customer is lost. \\
To verify the first statement, we express the blocking probability in terms of the queue length distribution in the $M/\hypo/1/\infty$-queueing system, which decays exponentially according to classical results in \cite{neuts:81}. The second statement follows from the Poisson Arrivals See Time Average (PASTA) Theorem and Vitali's  Convergence Theorem. Section~\ref{app:blocking_probability} details the proof.
\hfill \halmos
\endproof

The other feature is that we can \emph{couple} the two systems so that the cumulative number of actual customers up to time $t$ in the $M/\hypo/1/B_{1}$-queueing system is less than or equal to that in the tandem queueing system $(B_1,B_2,\dots,B_J)$:
\begin{proposition}\label{prop:A_geq_tilde_A}
There exists a common probability space, on which we have, almost surely,
\begin{align*}
    A(t)~\geq~\widetilde{A}(t)\quad\textrm{for all $t\geq 0$},
\end{align*}
where $A(t)$ and $\tilde{A}(t)$ are the cumulative numbers of actual customers up to time $t$ in the tandem queueing system $(B_1,B_2,\dots,B_J)$  and the $M/\hypo/1/B_1$-queueing system, respectively.
\end{proposition}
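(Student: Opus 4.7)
The plan is a pathwise coupling. On a common probability space I would place a single Poisson process of rate $\lambda^{\pi}$ driving the potential arrivals in both systems, together with $J$ independent Poisson processes of rates $\mu_1,\dots,\mu_J$ that serve as shared ``service clocks''. At a jump of the rate-$\mu_j$ clock, the tandem registers a station-$j$ completion iff station $j$ currently holds a customer and is not blocked downstream (for $j<J$, the condition $X_{j+1}^\pi\leq B_{j+1}$), while the $M/\hypo/1/B_1$ system registers a phase-$j$ completion iff its in-service customer is in phase $j$. With all driving randomness shared, $A(t)$ and $\widetilde{A}(t)$ become deterministic functionals of the Poisson sample path, and the claim reduces to an event-by-event verification.

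I would run an induction over event times, maintaining the joint invariant
\[
A(t)\;\geq\;\widetilde{A}(t)\qquad\text{and}\qquad A(t)-X_1^\pi(t)\;\geq\;\widetilde{A}(t)-\widetilde{N}(t).
\]
The second inequality says that cumulative station-$1$ departures in the tandem dominate cumulative full-system departures in $M/\hypo/1/B_1$: a customer in the tandem vacates station $1$ after a single exponential phase, whereas in $M/\hypo/1/B_1$ a customer occupies a system slot through all $J$ phases before exiting. The two inequalities together yield $A(t)-\widetilde{A}(t)\geq\max\{0,\,X_1^\pi(t)-\widetilde{N}(t)\}$, which supplies precisely the slack needed to absorb an arrival that $M/\hypo/1/B_1$ accepts while the tandem rejects (such an arrival has $X_1^\pi(t-)=B_1+1$ and $\widetilde{N}(t-)\leq B_1$, so the slack is already at least one).

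The routine cases of the induction are straightforward: at arrivals the slack above keeps the first invariant alive; at $\mu_j$-clocks for $j<J$ neither $\widetilde{A}$ nor $\widetilde{N}$ changes and a tandem firing only increases $A-X_1^\pi$. The main obstacle is the $\mu_J$-clock, at which a phase-$J$ completion in $M/\hypo/1/B_1$ increases $\widetilde{A}(t)-\widetilde{N}(t)$ by one while $A(t)-X_1^\pi(t)$ is unchanged, which threatens the second invariant. I would close this case by augmenting the invariant with a bookkeeping term that releases the slack built up while the $M/\hypo$ customer progresses through its first $J-1$ phases, or, equivalently, by decomposing the argument into two monotonicity steps: (i) a standard buffer-monotonicity coupling showing tandem $(B_1,B_2,\dots,B_J)$ pathwise dominates tandem $(B_1,0,\dots,0)$ in acceptance count, and (ii) a coupling showing that tandem $(B_1,0,\dots,0)$ dominates $M/\hypo/1/B_1$ because the tandem allows different customers to occupy different stations in parallel, whereas the single-server $M/\hypo$ forces all $J$ phases of each customer to be executed sequentially.
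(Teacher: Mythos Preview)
Your clock-based coupling is a different route from the paper's. The paper does not use shared Poisson service clocks; it couples via shared \emph{service-time sequences}. On the product space it lays down the potential-arrival epochs $V(1)<V(2)<\cdots$ and, for each $n$ and $j$, a service time $U_j(n)\sim\exp(\mu_j)$ that is assigned to the $n$-th \emph{actual} customer in \emph{both} systems. The induction is then on the customer index $n$ (not on event times): one shows that the arrival time $T(n,0)$ and the exit time $T(n,J)$ of the $n$-th actual customer in the tandem are no later than their counterparts $\widetilde{T}(n,0),\widetilde{T}(n,J)$ in $M/\hypo/1/B_1$. The heart of the step is that admission of the $n$-th actual customer in the tandem only requires that customer $n-B_1-1$ have cleared \emph{station~1}, whereas admission in $M/\hypo/1/B_1$ requires that the corresponding earlier customer have cleared the \emph{whole system}; since $T(\cdot,1)\leq T(\cdot,J)\leq\widetilde{T}(\cdot,J)$ by induction, the tandem threshold is never later. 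Because service times are attached to customers rather than to clock epochs, the $\mu_J$-clock difficulty you flag simply never arises.

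As written, your proposal has a genuine gap. You correctly isolate the obstruction at $\mu_J$ epochs, but neither remedy is carried through. Fix~(a) is unspecified. Fix~(b) trades one unfinished step for two: step~(i), pathwise buffer monotonicity under communication blocking, is plausible but needs its own coupling argument; step~(ii) is not supported by the reason you give. In tandem $(B_1,0,\dots,0)$ with communication blocking, station~$j$ may not \emph{start} service until station~$j+1$ is empty, so downstream stations do \emph{not} operate in parallel---at any time at most one of stations $2,\dots,J$ is actually working. The real advantage of the tandem over $M/\hypo/1/B_1$ is not parallel service but earlier admission: a customer releases its station-1 slot after a single exponential phase, whereas in $M/\hypo/1/B_1$ a system slot is released only after all $J$ phases. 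Converting that into a pathwise inequality under your clock coupling still needs a working invariant you have not supplied. The paper's customer-indexed Lindley-type recursion delivers all of this in one step without the decomposition.
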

\proof{Sketch of Proof.}
	The key idea for the proof is that, if we replace the communication blocking mechanism for the tandem queueing system $(B_{1},B_{2},\dots,B_{J})$ by the following rule:
	\begin{center}
		\emph{The server at station $1$ is not allowed to start service until all other stations are empty},
	\end{center}
	the system becomes the $M/\hypo/1/B_{1}$-queueing system. 
	Intuitively, the arrival time for the $n$th actual customer in the latter system is not greater than that in the former system, which implies the above inequality.
	Section~\ref{sec:proof of coupling} details the proof.
\hfill \halmos
\endproof

Finally, we formally prove  Theorem~\ref{thm:asymptotic_static} from Propositions~\ref{prop:lower bound long-run average actual arrivals} and~\ref{prop:A_geq_tilde_A}:\\
Let $c$ and $p$ be constant in Proposition~\ref{prop:lower bound long-run average actual arrivals}.
Proposition~\ref{prop:A_geq_tilde_A} implies $\E[A(t)/t]\geq\E[\widetilde{A}(t)/t]$ for $t>0$, so
\begin{align*}
    g^\pi(B_1,B_2,\dots,B_J)~=~
    a^\pi\lim_{t\to\infty}\E\left[\frac{A(t)}{t}\right]
    ~\geq~a^\pi\lim_{t\to\infty}\E\left[ \frac{\widetilde{A}(t)}{t}\right] 
    ~\geq~ a^\pi\lambda^{\pi}\left(1-c\,p^{B_1-1}\right),
\end{align*}
where the first equation follows from \eqref{gain_from_A},
which  proves Theorem~\ref{thm:asymptotic_static}. 

\section{Concluding Comments}
\label{sec:concluding comments}

Tandem queueing systems are widely-used stochastic models arising from real-life operations systems such as the processing systems at container terminals. 
This work studies the optimal pricing problem that is formulated as an MDP model aimed at maximizing the long-run expected time-average revenue of a service provider who manages a tandem queueing system with finite buffers.

We study a general tandem queueing system with arbitrarily many stations. We show that the optimal static pricing policy results in a gain that converges to the optimal gain at an exponential rate as the buffer size at the first station becomes large. This result suggests a viable solution to service providers: They can solve the easier static dynamic pricing problem to obtain a well-performed pricing policy rather than dealing with the much harder dynamic pricing problem. More interestingly, we propose a \emph{simple static pricing policy} obtained from  solving an easy  optimization problem (i.e., upper-bound optimization problem~\eqref{eq:ideal_gain_problem}) and show that the simple policy is asymptotically optimal as the buffer size before the first station becomes large, which provides an easy-to-get and nearly optimal policy to practitioners. 


A number of numerical experiments are performed to justify our theoretical results and to test the impact of various system parameters on the performance of the simple static policy. 
Simulation results yield plenty of insightful observations, which includes that (i) the simple static pricing policy performs quite well even when the buffer before the first station is moderate-sized and (ii) the gap between the gain under the simple static pricing policy and the optimal gain vanishes with an exponential rate as the buffer size before the first station goes to infinity. This work may be extended toward several directions. For instance, one could investigate the performance of optimal static pricing policies for a tandem line with non-Markovian arrival and service processes. One may also study optimal pricing for a network of queues with a general topology.

\bibliographystyle{informs2014} 
\bibliography{MDPreference}{} 

\begin{thebibliography}{35}
\providecommand{\natexlab}[1]{#1}
\providecommand{\url}[1]{\texttt{#1}}
\providecommand{\urlprefix}{URL }

\bibitem[{Af\`{e}che \protect\BIBand{} Ata(2013)}]{afeche:13}
Af\`{e}che P, Ata B (2013) Bayesian dynamic pricing in queueing systems with
  unknown delay cost characteristics. \emph{Manufacturing \& Service Operations
  Management} 15(2):292--304.

\bibitem[{Af\`{e}che \protect\BIBand{} Pavlin(2016)}]{afeche:16}
Af\`{e}che P, Pavlin JM (2016) Optimal price/lead-time menus for queues with
  customer choice: {Segmentation}, pooling, and strategic delay.
  \emph{Management Science} 62(8):2412--2436.

\bibitem[{{Aktaran-Kalayci} \protect\BIBand{} Ayhan(2009)}]{aktaran:09}
{Aktaran-Kalayci} T, Ayhan H (2009) Sensitivity of optimal prices to system
  parameters in a steady-state service facility. \emph{European Journal of
  Operational Research} 193(1):120--128.

\bibitem[{Altiok(2000)}]{altiok:00}
Altiok T (2000) Tandem queues in bulk port operations. \emph{Annals of
  Operations Reserach} 93(1):1--14.

\bibitem[{Banerjee \protect\BIBand{} Gupta(2012)}]{banerjee:12}
Banerjee A, Gupta UC (2012) Reducing congestion in bulk-service finite-buffer
  queueing system using batch-size-dependent service. \emph{Performance
  Evaluation} 69(1):53--70.

\bibitem[{Bar-Lev et~al.(2013)Bar-Lev, Blanc, Boxma, Janssen, \protect\BIBand{}
  Perry}]{Bar-Lev:13}
Bar-Lev SK, Blanc H, Boxma O, Janssen G, Perry D (2013) Tandem queues with
  impatient customers for blood screening procedures. \emph{Methodology and
  Computing in Applied Probability} 15(2):423--451.

\bibitem[{\c{C}il et~al.(2011)\c{C}il, Karaesmen, \protect\BIBand{}
  {\"O}rmeci}]{cil:11}
\c{C}il EB, Karaesmen F, {\"O}rmeci EL (2011) Dynamic pricing and scheduling in
  a multi-class single-server queueing system. \emph{Queueing Systems: Theory
  and Applications} 67(4):305--331.

\bibitem[{\c{C}il et~al.(2009)\c{C}il, {\"O}rmeci, \protect\BIBand{}
  Karaesmen}]{cil:09}
\c{C}il EB, {\"O}rmeci EL, Karaesmen F (2009) Effects of system parameters on
  the optimal policy structure in a class of queueing control problems.
  \emph{Queueing Systems: Theory and Applications} 61(4):273--304.

\bibitem[{Cheng \protect\BIBand{} Yao(1993)}]{cheng:93}
Cheng DW, Yao DD (1993) Tandem queues with general blocking: {A} unified model
  and comparison results. \emph{Discrete Event Dynamic Systems: {Theory} and
  Applications} 2:207--234.

\bibitem[{Ching et~al.(2009)Ching, Choi, Li, \protect\BIBand{}
  Leung}]{ching:00}
Ching W, Choi S, Li T, Leung IKC (2009) A tandem queueing system with
  applications to pricing strategy. \emph{Journal of Industrial and Management
  Optimization} 5(1):103--114.

\bibitem[{Cooper(1972)}]{cooper:72}
Cooper R (1972) \emph{Introduction to queueing theory} (Macmillan).

\bibitem[{Dijk \protect\BIBand{} Lamond(1988)}]{Dijk:88}
Dijk NMV, Lamond BF (1988) Simple bounds for finite single-server exponential
  tandem queues. \emph{Operations Research} 36(3):470--477.

\bibitem[{{EUROGATE}(2014)}]{eurogate:14}
{EUROGATE} (2014) Prices and conditions for {EUROGATE} container terminal
  {Hamburg GmbH}. Technical report.

\bibitem[{Folland(1999)}]{Folland:99}
Folland GB (1999) \emph{Real analysis}. Pure and Applied Mathematics (New York)
  (John Wiley \& Sons, Inc., New York), second edition, modern techniques and
  their applications, A Wiley-Interscience Publication.

\bibitem[{Ghoneim \protect\BIBand{} Stidham(1985)}]{ghoneim:85}
Ghoneim HA, Stidham S (1985) Control of arrivals to two queues in series.
  \emph{European Journal of Operational Research} 21(3):399--409.

\bibitem[{Gouberman \protect\BIBand{} Siegle(2014)}]{Gouberman:14}
Gouberman A, Siegle M (2014) \emph{Markov Reward Models and Markov Decision
  Processes in Discrete and Continuous Time: {Performance} Evaluation and
  Optimization}, 156--241 (Berlin, Heidelberg: Springer Berlin Heidelberg).

\bibitem[{Hassin \protect\BIBand{} Koshman(2017)}]{hassin:17}
Hassin R, Koshman A (2017) Profit maximization in the {M/M/1} queue.
  \emph{Operations Research Letters} 45(5):436--441.

\bibitem[{Haviv \protect\BIBand{} Randhawa(2014)}]{haviv:14}
Haviv M, Randhawa RS (2014) Pricing in queues without demand information.
  \emph{Manufacturing \& Service Operations Management} 16(3):401--411.

\bibitem[{Le \protect\BIBand{} Hossain(2008)}]{Le:08}
Le L, Hossain E (2008) Tandem queue models with applications to qos routing in
  multihop wireless networks. \emph{IEEE Transactions on Mobile Computing}
  7(8):1025--1040.

\bibitem[{Low(1974{\natexlab{a}})}]{low:74a}
Low DW (1974{\natexlab{a}}) Optimal dynamic pricing policies for an {$M/M/s$}
  queue. \emph{Operations Research} 22(3):545--561.

\bibitem[{Low(1974{\natexlab{b}})}]{low:74b}
Low DW (1974{\natexlab{b}}) Optimal pricing for an unbounded queue. \emph{IBM
  Journal of Research and Development} 18(4):290--302.

\bibitem[{Maglaras(2006)}]{maglaras:06}
Maglaras C (2006) Revenue management for a multiclass single-server queue via a
  fluid model analysis. \emph{Operations Research} 54(5):914--932.

\bibitem[{Maoui et~al.(2009)Maoui, Ayhan, \protect\BIBand{} Foley}]{maoui:09}
Maoui I, Ayhan H, Foley RD (2009) Optimal static pricing for a service facility
  with holding cost. \emph{European Journal of Operational Research}
  197(3):912--923.

\bibitem[{Marin \protect\BIBand{} Bul\`{o}(2011)}]{martin:11}
Marin A, Bul\`{o} SR (2011) Explicit solutions for queues with hypo-exponential
  service time and applications to product-form analysis. \emph{Proceedings of
  the 5th International ICST Conference on Performance Evaluation Methodologies
  and Tools}, 166--175, VALUETOOLS '11, ISBN 978-1-936968-09-1.

\bibitem[{Mickens(2015)}]{wickens:15}
Mickens RE (2015) \emph{Difference Equations: {Theory}, Applications and
  Advanced Topics} (New York: CRC Press), 3 edition.

\bibitem[{Naor(1969)}]{naor:69}
Naor P (1969) The regulation of queue size by levying tolls.
  \emph{Econometrica} 37(1):15--24.

\bibitem[{Neuts(1981)}]{neuts:81}
Neuts MF (1981) \emph{Matrix-Geometric Solutions in Stochastic Models: An
  Algorithmic Approach}. Johns Hopkins Series in the Mathematical Sciences
  (Johns Hopkins University Press), ISBN 9780801825606.

\bibitem[{Paschalidis \protect\BIBand{} Tsitsiklis(2000)}]{paschalidis:00}
Paschalidis IC, Tsitsiklis JN (2000) Congestion-dependent pricing of network
  services. \emph{IEEE/ACM Transactions on Networking} 8(2):171--184.

\bibitem[{Puterman(1994)}]{puterman:94}
Puterman ML (1994) \emph{Markov Decision Processes} (New York: John Wiley \&
  Sons, Inc.).

\bibitem[{Talluri \protect\BIBand{} {van Ryzin}(2004)}]{tall:04b}
Talluri KT, {van Ryzin} GJ (2004) \emph{The Theory and Practice of Revenue
  Management} (New York: Springer).

\bibitem[{Wang et~al.(2018)Wang, Andrad{\'o}ttir, \protect\BIBand{}
  Ayhan}]{wang:16}
Wang X, Andrad{\'o}ttir S, Ayhan H (2018) Optimal pricing for tandem queues
  with finite buffers. Technical report, Mississippi State University.

\bibitem[{Wolff(1982)}]{wolff:82}
Wolff RW (1982) Poisson arrivals see time average. \emph{Operations Research}
  30(2):223--231.

\bibitem[{Yoon \protect\BIBand{} Lewis(2004)}]{yoon:04}
Yoon S, Lewis ME (2004) Optimal pricing and admission control in a queueing
  system with periodically varying parameters. \emph{Queueing Systems: Theory
  and Applications} 47(3):177--199.

\bibitem[{Ziya et~al.(2006)Ziya, Ayhan, \protect\BIBand{} Foley}]{ziya:06}
Ziya S, Ayhan H, Foley RD (2006) Optimal prices for finite capacity queueing
  systems. \emph{Operations Research Letters} 34(2):214--218.

\bibitem[{Ziya et~al.(2008)Ziya, Ayhan, \protect\BIBand{} Foley}]{ziya:08}
Ziya S, Ayhan H, Foley RD (2008) A note on optimal pricing for finite capacity
  queueing systems with multiple customer classes. \emph{Naval Research
  Logistics} 55(5):412--418.

\end{thebibliography}

\ECSwitch


\ECHead{Supporting Proofs}\label{sec:supporting proofs}
This section provides supporting lemmas and supplementary proofs.

\section{Proof of Proposition~\ref{prop:lower bound long-run average actual arrivals}} \label{app:blocking_probability}
In this section, we prove the following proposition:
\begin{repeatproposition}[Proposition~\ref{prop:lower bound long-run average actual arrivals} Revisited]
    For the $M/\hypo/1/B_{1}$-queueing system with a large enough $B_{1}$, we have
    \begin{align*}
        \lim_{t\to\infty} \E\left[\frac{\widetilde{A}(t)}{t}\right]
        ~\geq~ \tilde{\lambda}(1-c\,{p}^{B_{1}-1}),
    \end{align*}
    where $c$ and $p<1$ are positive constants that depend only on $\mu_{1},\mu_{2},\dots,\mu_{J}$, and $\lambda^{\pi}$.
\end{repeatproposition}
Recall that the $M/\hypo/1/B_{1}$-queueing system consists of one buffer and a single server such that:
\begin{enumerate}[label=(\arabic*)]
    \item Arrival process is Poisson with rate $\lambda^{\pi}>0$.
    \item Service time follows the hypo-exponential distribution with $J$ phases and rates $\mu_1,\mu_2,\dots,\mu_J$.
    \item Buffer size is $B_{1}\in\mathbb N\cup\{\infty\}$ so that the maximum number of customers in the system is $B_{1}+1$ when $B_{1}$ is finite.
    \item The system is underloaded (i.e., $\lambda^{\pi}(1/\mu_{1}+1/\mu_{2}+\dots+1/\mu_{J})<1$).
\end{enumerate}
Let $\tilde{A}(t)$ represent the cumulative number of customers who enters the system (i.e., ``actual'' customers) up to time $t$.

To prove the above proposition, we investigate the blocking probability $\tilde{\beta}$ (i.e., the steady-state probability that an incoming customer arrives when the system is saturated and the customer is lost) and show 
\begin{align}\label{eq:upper bound of blocking probability}
      \widetilde{\beta}~\leq~ c\,{p}^{B_{1}-1},
\end{align}
for large enough $B_{1}$, where $c$ and $p<1$ are positive constants that depend only on $\mu_{1},\mu_{2},\dots,\mu_{J}$, and $\lambda^{\pi}$, and 
\begin{align}\label{eq:expected long-run average actual customers}
	\lim_{t\to\infty} \E\left[ \frac{\widetilde{A}(t)}{t} \right]~=~\lambda^{\pi}(1-\widetilde{\beta}),
\end{align}
from which Proposition~\ref{prop:lower bound long-run average actual arrivals} immediately follows.

\subsection{Proof of \texorpdfstring{\eqref{eq:upper bound of blocking probability}}{Lg}: An Upper Bound on the Blocking Probability}
We show \eqref{eq:upper bound of blocking probability} by deriving an upper bound on the blocking probability of $M/\hypo/1/B_{1}$-queueing system. Throughout this section, the norm of vectors or matrices is $1$-norm, which is defined by
\begin{align*}
    \norm{\bx}~\defi~&|x_1|+|x_2|+\cdots+|x_J|\qquad \textrm{for $J$-dimensional vector $\bx\in\mathbb R^{J}$};\\
    \norm{\bR}~\defi~&\max\left\{\frac{\norm{\bR\,\bx}}{\norm{\bx}}\,:\,\bx\in\mathbb R^{J} \right\}\qquad \textrm{for $J\times J$ matrix $\bR\in\mathbb R^{J\times J}$};
\end{align*}
Therefore, we have $\norm{\bR\,\bx}\leq\norm{\bR}\,\norm{\bx}$. Furthermore, for positive square matrix $\bR$, we denote by $\osp(\bR)$ the spectral radius, which is the largest among the absolute values of eigenvalues of $\bR$. Then, Gelfand's formula states that
\begin{align}\label{eq:gelfand}
	\lim_{n\to\infty} \norm{\bR^n}^{1/n}~=~\osp(\bR),
\end{align}
for any positive square matrix $\bR$.

To prove \eqref{eq:upper bound of blocking probability}, we first review the stationary distribution of the $M/\hypo/1/\infty$-queueing system in which the size of the buffer is infinite. Note that the blocking probability of the system is zero.
The state of $M/\hypo/1/\infty$ queueing system is represented by $(n,k)\in \mathbb Z_+\times\{1,2,\dots,J\}$, in which $n$ is the number of customers in the system and $j$ is the phase of current service. The state of no customers is denoted by $(0,0)$.  
We denote by $\widetilde{\boldsymbol\eta}$ the stationary distribution of the system, in which $\widetilde{\eta}(n,j)$ is the probability that the steady-state system has $n$ customers and the phase of service is $j$. 
We also let $\widetilde{\boldsymbol{\eta}}(n,\cdot)$ be a $J$-dimensional vector, of which the $j$-th entry  is $\widetilde{\eta}(n,j)$ for all $n\in\mathbb N$.
The following lemma states a matrix-geometric representation on the stationary distribution $\widetilde{\boldsymbol\eta}$. 
\begin{lemma} {\cite[Theorem 3.2.1 and Theorem 3.2.1]{neuts:81}}  \label{lemma:matrix-geometry for Hypo}\ \\ %
	For the stationary distribution $\widetilde{\boldsymbol\eta}$ of the $M/\hypo/1/\infty$-queueing system, we have
	\begin{align*} 
            \widetilde{\boldsymbol{\eta}}(n+1,\cdot)~=~ 
            \bR^{n}\,\widetilde{\boldsymbol{\eta}}(1,\cdot)\qquad\forall n\in\mathbb Z_+,
    \end{align*}
    where $\bR$ is a $J\times J$ positive matrix with spectral radius $\osp(\bR)<1$.
\end{lemma}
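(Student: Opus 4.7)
The plan is to recognize the $M/\hypo/1/\infty$ queue as a quasi-birth-and-death (QBD) chain on the state space $\{(0,0)\}\cup\{(n,j)\,:\,n\geq 1,\ j\in\{1,\dots,J\}\}$ and apply the matrix-analytic machinery of \cite{neuts:81}, which the lemma explicitly credits. The workhorse step is reducing the stationary balance equations to a quadratic matrix equation for $R$; the delicate step is the spectral-radius bound $\osp(R)<1$.

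First, I would list the non-zero transition rates out of $(n,j)$ for $n\geq 1$: arrivals $(n,j)\to(n+1,j)$ at rate $\lambda^{\pi}$, phase advancement $(n,j)\to(n,j+1)$ at rate $\mu_j$ for $j<J$, and service completion $(n,J)\to(n-1,1)$ at rate $\mu_J$ (with the terminal completion $(1,J)\to(0,0)$ at the boundary). This exposes a level-homogeneous block-tridiagonal generator with constant $J\times J$ blocks $A_0=\lambda^{\pi}I$ (upward), $A_1$ (within-level, encoding phase advancement on the superdiagonal and exit rates on the diagonal), and $A_2$ (downward, with $\mu_J$ in position $(J,1)$ and zeros elsewhere).

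Second, I would substitute the geometric ansatz $\widetilde{\boldsymbol{\eta}}(n+1,\cdot)=R\,\widetilde{\boldsymbol{\eta}}(n,\cdot)$ into the block balance equations and collapse them, for $n\geq 2$, to a quadratic matrix equation of the form $A_0^{\top}+A_1^{\top}R+A_2^{\top}R^2=0$, where the transposes reflect the column-vector convention adopted in the lemma. Existence of a minimal non-negative solution follows from the monotone iteration $R_{k+1}=-(A_0^{\top}+A_2^{\top}R_k^2)(A_1^{\top})^{-1}$ started at $R_0=0$; positivity is inherited from the irreducibility of the QBD, which holds because $\lambda^{\pi}>0$ makes every phase reachable from any state. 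The two boundary equations at levels $n=0$ and $n=1$ pin down $\widetilde{\eta}(0,0)$ and $\widetilde{\boldsymbol{\eta}}(1,\cdot)$ up to the normalization $\sum_{n,j}\widetilde{\eta}(n,j)=1$, completing the recursion $\widetilde{\boldsymbol{\eta}}(n+1,\cdot)=R^{n}\widetilde{\boldsymbol{\eta}}(1,\cdot)$.

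The main obstacle is proving $\osp(R)<1$. By Neuts' drift criterion, this is equivalent to positive recurrence of the QBD, which in turn reduces to the mean-drift condition $\lambda^{\pi}(\mu_1^{-1}+\cdots+\mu_J^{-1})<1$. Since $\lambda^{\pi}\leq\lambda$ and $\rho<1$ by assumption, this stability condition holds. Rigorously, the bound can be obtained either by interpreting the entries of $R$ probabilistically as expected taboo-visits during an excursion from level $n$ to level $n+1$ (and bounding them by geometric series), or by a Perron-Frobenius argument on $R$ paired with a routine phase-balance computation. Because both arguments are carried out explicitly in \cite{neuts:81}, I would finish by invoking that reference rather than reproducing the technical verification.
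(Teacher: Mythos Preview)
The paper does not prove this lemma at all: it states it as a direct citation of \cite[Theorem~3.2.1]{neuts:81} and merely adds a pointer to \cite{martin:11} for an explicit formula for $\bR$. Your proposal is a correct and standard sketch of exactly the Neuts matrix-geometric argument that the paper defers to the reference, so you are aligned with the paper's approach (indeed, you supply more detail than the paper itself does).
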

\noindent
An explicit formula for $\bR$ can be found in Definition 1 and Theorem 1 of \cite{martin:11}.

Since $\osp(\bR)<1$, Gelfand's formula in \eqref{eq:gelfand} implies that, for any $p$ such that $\osp(\bR)<p<1$ (e.g., we can set $p=(\osp(\bR)+1)/2$), there exists $N$ such that 
\begin{align*}
  \norm{\bR^n}^{1/n}~\leq~p\qquad\textrm{or}\qquad\norm{\bR^n}~\leq~p^{n}\qquad\forall n\geq N.
\end{align*}
Therefore, from Lemma~\ref{lemma:matrix-geometry for Hypo}, we have 
\begin{align}\label{eq:upper bound for eta}
  \norm{\widetilde{\boldsymbol{\eta}}(n+1,\cdot)}~=~\norm{\bR^n\,\widetilde{\boldsymbol{\eta}}(1,\cdot)}
  ~\leq~\norm{\bR^{n}}\, \norm{\widetilde{\boldsymbol{\eta}}(1,\cdot)}
  ~\leq~ p^n\norm{\widetilde{\boldsymbol{\eta}}(1,\cdot)}
\end{align}
for all $n\geq N$.

On the other hand, we express the blocking probability $\tilde{\beta}$ of the  $M/\hypo/1/B_{1}$-queueing system in terms of stationary distribution $\widetilde{\boldsymbol{\eta}}$. Note that $\norm{\widetilde{\boldsymbol{\eta}}(n,\cdot)}=\sum_{j=1}^{J} \widetilde{\eta}(n,j)$ is the probability that there are $n$ customers in the steady-state $M/\hypo/1/\infty$-queueing system.
Then, since $\tilde{\beta}$ is the probability that the $M/\hypo/1/B_{1}$ system has $(B_{1}+1)$ customers in the steady state, we have
\begin{align}\label{eq:blocking_probability}
        \widetilde{\beta}
        ~=~\frac{\norm{\widetilde{\boldsymbol\eta}(B_{1}+1,\cdot)}}{1-\sum_{n=B_{1}+2}^\infty\norm{\widetilde{\boldsymbol\eta}(n,\cdot)}},
\end{align}
which follows from well-established comparison results on queueing systems with finite and infinite buffers (cf.~\cite{cooper:72}).

Combining \eqref{eq:upper bound for eta} and \eqref{eq:blocking_probability}, we obtain that
\begin{align*}
 \widetilde{\beta}~\leq~  
                 \frac{p^{B_{1}}}{1-
                        \frac{p^{B_{1}+1}}{1-B_{1}}\,\norm{\widetilde{\boldsymbol\eta}(1,\cdot)}
                }\,\norm{\widetilde{\boldsymbol\eta}(1,\cdot)} 
                ~\leq~c\,p^{B_{1}}\qquad \forall K\geq N,
\end{align*}
where
   $ c~=~
    \norm{\widetilde{\boldsymbol\eta}(1,\cdot)}
    \left(1-
    \frac{1}{1-p}\,\norm{\widetilde{\boldsymbol\eta}(1,\cdot)}\right)^{-1}
    $. Since $\tilde{\boldsymbol\eta}$ depends only on $\mu_{1},\mu_{2},\dots,\mu_{J}$ and $\lambda^{\pi}$, so do
    $\bR$, $\osp(\bR)$, and $c$, which completes the proof of \eqref{eq:upper bound of blocking probability}.

\subsection{Proof of \texorpdfstring{\eqref{eq:expected long-run average actual customers}}{Lg}}
For stochastic process $\tilde{A}(t)$ --- the cumulative number of actual customers up to time $t$ in the $M/\hypo/1/B_{1}$-queueing system, we prove \eqref{eq:expected long-run average actual customers}:
\begin{align*} 
	\lim_{t\to\infty} \E\left[ \frac{\widetilde{A}(t)}{t} \right]~=~\lambda^{\pi}(1-\widetilde{\beta}).
\end{align*}
\noindent
From the strong law of large numbers for CTMC and Theorem $1$ in \cite{wolff:82} (i.e., Poisson Arrivals See Time Average: PASTA), we have  
\begin{align*}
    \lim_{t\to\infty} \frac{\widetilde{A}(t)}{N(t)}~=~1-\widetilde\beta\qquad\textrm{almost surely},
\end{align*}
where $N(t)$ is the number of all potential arrivals by time $t$
(i.e.,   $N(t;\bomega):=\max\{m:V(m;\bomega)\leq t\}$).
Since $N(t)$ is a Poisson process with rate $\lambda^\pi$, we have
\begin{align*} 
    \lim_{t\to\infty} \frac{\widetilde{A}(t)}{t}~=~
    \lim_{t\to\infty} \frac{N(t)}{t}\frac{\widetilde{A}(t)}{N(t)}~=~
    \lambda^\pi(1-\widetilde\beta)\qquad\textrm{almost surely}.
\end{align*}
Also, since $\tilde{A}(t)\leq N(t)$ almost surely and $\{N(t)/t:t>0\}$ is uniformly integrable, we conclude that $\{\tilde{A}(t)/t:t>0\}$ is uniformly integrable. Therefore, by Vitali's Convergence Theorem (cf.~\cite{Folland:99}), we have that 
\begin{align*}
        \lim_{t\to\infty} \E\left[\frac{\widetilde{A}(t)}{t}\right]~=~\lambda^\pi(1-\widetilde{\beta}),
\end{align*}
which completes the proof.


\section{Proof of Proposition~\ref{prop:A_geq_tilde_A}}\label{sec:proof of coupling}
\label{sec:asymptotic optimality of the optimal st policies}
By constructing sample paths for the actual arrival processes $A(t)$ and $\tilde{A}(t)$ and showing that every \emph{coupled} sample path satisfies $A(t)\geq\tilde{A}(t)$ for all $t\geq 0$, this section provides the proof of the proposition below:
\begin{repeatproposition}[Proposition~\ref{prop:A_geq_tilde_A} Revisited]
	There exists a common probability space, in which $A(t)$ and $\tilde{A}(t)$ are defined such that
\begin{align*}
    A(t)~\geq~\widetilde{A}(t)\quad\textrm{for all $t\geq 0$}
\end{align*}
almost surely, where $A(t)$ and $\tilde{A}(t)$ are the cumulative numbers of actual customers up to time $t$ in the tandem queueing system $(B_1,B_2,\dots,B_J)$  and the $M/\hypo/1/B_1$-queueing system, respectively. 
\end{repeatproposition}

\subsection{Construction of Sample Paths}
To build sample paths of the actual arrival process $A(t)$  for system 
$(B_1,B_2,\dots,B_J)$ under static policy $\pi$, we first introduce fundamental processes for the construction.

\begin{definition}{\bf (Fundamental Processes)}
    \begin{enumerate}[label=(\arabic*)]
    \item
 {\bf The Potential Arrival Time Process}, $\{V(m)\}_{m\in\mathbb N}$, defined on probability space $(\Omega_V,\mathcal{F}_V,\mathbb P_V)$, is a discrete-time process, where $V(m+1)-V(m)$ is independent and exponentially distributed with mean $1/\lambda^{\pi}$ for all $m\in\mathbb Z_+$, where we define $V(0)=0$.
$V(m)$ is the arrival time of the $m$-th ``potential'' customer.
    \item
 {\bf The Service Time Process of Station $j$}, $\{U_{j}(n)\}_{n\in\mathbb N}$, defined on a probability space $(\Omega_{j},\mathcal{F}_{j},\mathbb P_{j})$, is a discrete-time process, where $U_{j}(n)$ is independent and exponentially distributed with mean $1/\mu_{j}$ for $j=1,2,\dots,J$.
$U_{j}(n)$ is the service time of the $n$-th ``actual'' customer at station $j$. 
    \end{enumerate}
\end{definition}
For the rest of Section~\ref{sec:proof of coupling}, we work with the product space 
\begin{align*}
        (\Omega,\mathcal{F},\mathbb P):=(\Omega_V\times\Omega_1\times\Omega_2\times\dots\times\Omega_J,
        \mathcal{F}_V\times\mathcal{F}_1\times\mathcal{F}_2\times\dots\times\mathcal{F}_J,
        \mathbb P_V\times\mathbb P_1\times\mathbb P_2\times\dots\times\mathbb P_J).
\end{align*}
With a slight abuse of notation, we use the same symbols $V(m)$ and  $U_{j}(n)$ for their corresponding extensions on $\Omega$; that is, $V(m;\bomega):=V(m;\omega_V)$ and $U_{j}(n;\bomega):=U_{j}(n;\omega_{j})$, where $\bomega\in\Omega$ and $\bomega=(\omega_V,\omega_1,\omega_2,\dots,\omega_J)$. 

For $n\in\mathbb N$ and $j\in\{1,2,\dots,J\}$, we denote by $T(n,j)$ the time when the $n$-th actual customer leaves station $j$ (e.g., the $n$-th customer leaves the system at $T(n,J)$). 
We also let $T(n,0)$ be the arrival time of the $n$-th actual customer. 
By notational convenience, we set $T(n,j)=0$ for $n\in\mathbb{Z}$ with $n\leq 0$ and $j\in\{1,2,\dots,J\}$.
Since an actual customer is a potential customer who arrives witnessing a non-saturated station~1, 
the $n$-th actual customer is the first potential customer whose arrival time is greater than the time when the $(n-B_1-1$)-th actual customer leaves station $1$.
Therefore, we have   
\begin{subequations}
\begin{align}
        T(n,0;\bomega)~=~\min\left\{V(m;\bomega)\,:\,V(m;\bomega)>T(n-B_1-1,1;\bomega)\right\} \label{eq:T_n_0}
\end{align}	
for all $n\in\mathbb N$. Also, station $j$ starts serving the $n$-th actual customer immediately after below three conditions are satisfied:
\begin{enumerate}[label=(\arabic*), itemsep=0in]
	\item The $n$-th actual customer is in station $j$;
	\item The $(n-1)$-th actual customer is not in station $j$;
	\item The $(j+1)$-th station is not saturated. In other words, the $(n-B_{j+1}-1)$-th actual customer is not in station $(j+1)$;
\end{enumerate}
Since the precessing time of the job is $U(n,j)$, we have
\begin{align}
        T(n,j;\bomega)~=~\Big(T(n,i-1;\bomega)\vee T(n-1,j;\bomega)\vee T(n-B_{j+1}-1,j+1;\bomega)\Big)+U(n,j;\bomega)  \label{eq:T_n_j}
\end{align}	
for all $n\in\mathbb N$ and $j=1,2,\dots,J$, where we set $T(n,J+1)=0$ for all $n\in\mathbb Z$. From the definition of $T(n,0)$, actual arrival process $A(t)$ becomes
\begin{align*} 
	A(t;\bomega)~=~\max\left\{n\,:\,T(n,0;\bomega)\leq t \right\}
\end{align*}
for all $t\geq 0$. 
\end{subequations}

Now, we define similar random variables for the $M/\hypo/1/B_1$-queueing system as follows:
\begin{enumerate}[label=(\arabic*), itemsep=0in]
	\item $\widetilde{T}(n,0)$ is the arrival time of the $n$-th actual customer;
	\item $\widetilde{T}(n,J)$ is the time when the $n$-th actual customer leaves the system;
	\item $\widetilde{A}(t)$ is the number of customers who actually enter the system  up to time $t$.
\end{enumerate}
Then, this queueing system can be coupled with the tandem queueing system $(B_1,B_2,\dots,B_1)$ in the previous section using fundamental processes $\{V(m)\}_{m\in\mathbb N}$ and $\{U(n,j)\}_{n\in\mathbb N}$ for all $j=1,2,\dots,J$ in that $V(m)$ represents the time when the $m$-th potential customer arrives and $U(n,1)+U(n,2)+\dots+U(n,J)$ is the service time for the $n$-th actual customer. Therefore, we have that
\begin{subequations}
\begin{align}
    \widetilde{T}(n,0;\bomega)~=~&\min\left\{V(m;\bomega)\,:\,V(m;\bomega)>\widetilde{T}(n-1,J);\bomega)\right\}, \label{eq:tilde_T_n_0} \\
    \widetilde{T}(n,J;\bomega)~=~&\widetilde{T}(n,0;\bomega)+U(n,1;\bomega)+U(n,2;\bomega)+\dots+U(n,J;\bomega), \label{eq:tilde_T_n_i} \\
    \widetilde{A}(t;\bomega)~=~&\max\left\{n\,:\,\widetilde{T}(n,0;\bomega)\leq t\right\}, \label{eq:tilde_A_from_tilde_T}
\end{align}
\end{subequations}
where we set $\widetilde{T}(n,0)=\widetilde{T}(n,J)=0$ for $n\leq 0$.

\subsection{Proof of \texorpdfstring{$A(t;\bomega)\geq\tilde{A}(t;\bomega)$}{Lg}}
	We first show, by induction on $n$, that 
	\begin{align}\label{eq:T_leq_tilde_T}
		T(n,0;\bomega)~\leq~\widetilde{T}(n,0;\bomega)\quad\textrm{and}\quad T(n,J;\bomega)~\leq~\widetilde{T}(n,J;\bomega)
	\end{align}
	for all $n\in\mathbb N$ and $\bomega\in\Omega$.
	\begin{enumerate}
		\item[i.] For $n=0$, by \eqref{eq:T_n_0} and \eqref{eq:tilde_T_n_0}, we have $T(1,0;\bomega)=\widetilde{T}(1,0;\bomega)=V(1;\bomega)$. 
		From \eqref{eq:T_n_j}, we also obtain
		\begin{align*}
			T(1,J;\bomega)
			~=~&T(1,J-1;\bomega)+U(1,J;\bomega)\\
			~=~&T(1,J-2;\bomega)+U(1,J-1;\bomega)+U(1,J;\bomega)\\
			~=~&\dots\\
			~=~&T(1,0;\bomega)+U(1,1;\bomega)+U(1,2;\bomega)+\dots+U(1,J;\bomega)~=~\widetilde{T}(1,J;\bomega),
		\end{align*}
		which implies that \eqref{eq:T_leq_tilde_T} holds for $n=1$.
		\item[ii.] Suppose that \eqref{eq:T_leq_tilde_T} holds for $n=1,2,\dots,k$.
                For $n=k+1$, by \eqref{eq:T_n_0} and \eqref{eq:tilde_T_n_0}, we have $T(k+1,0;\bomega)\leq\widetilde{T}(k+1,0;\bomega)$ because $T(k-B_1-1,1;\bomega)\leq T(k,J;\bomega)\leq \widetilde{T}(k,J;\omega)$.
                From \eqref{eq:T_n_j} and \eqref{eq:tilde_T_n_i}, we also have
		\begin{align*}
			T(k+1,J;\bomega)
			~=~&T(k+1,J-1;\bomega)+U(k+1,J;\bomega)\\
			~=~&T(k+1,J-2;\bomega)+U(k+1,J-1;\bomega)+U(k+1,J;\bomega)\\
			~=~&\dots\\
			~=~&T(k+1,0;\bomega)+U(k+1,1;\bomega)+U(k+1,2;\bomega)+\dots+U(k+1,J;\bomega)\\
            ~\leq~&\widetilde{T}(k+1,J;\bomega),
		\end{align*}
        which proves that \eqref{eq:T_leq_tilde_T} also holds for $n=k+1$.
\end{enumerate}
By induction, we have  $T(n,0;\bomega)~\leq~\widetilde{T}(n,0;\bomega)$ and $T(n,J;\bomega)~\leq~\widetilde{T}(n,J;\bomega)$ for all $n\in\mathbb N$ and $\bomega\in\Omega$.

In coupled systems, since $T(n,0;\bomega)\leq \widetilde{T}(n,0;\bomega)$ almost surely, we have
\begin{align*}
        A(t;\bomega)~=~\max\Big\{n:T(n,0;\bomega)\leq t \Big\}
        ~\geq~ \max\left\{n:\widetilde{T}(n,0;\bomega)\leq t\right\}~=~\widetilde{A}(t;\bomega)\qquad\forall \ \bomega\in\Omega,
\end{align*}
which completes the proof of Proposition~\ref{prop:A_geq_tilde_A}.

\end{document}